\newtheorem{theorem}{Theorem}[section]
\newtheorem{proposition}[theorem]{Proposition}
\newtheorem{lemma}[theorem]{Lemma}
\newtheorem{remark}[theorem]{Remark}
\newtheorem{corollary}[theorem]{Corollary}
\newcommand{\Sec}{\textrm{Sec}}
\newcommand{\closed}{\textrm{closed}}
\newcommand{\subvariety}{\textrm{subvariety}}
\newcommand{\rational}{\textrm{rational}}
\newcommand{\equivalence}{\textrm{equivalence}}
\newcommand{\stable}{\textrm{stable}}
\begin{document}
\title[Brill-Noether loci for moduli space]{A note on the Brill-Noether loci of small codimension in moduli space of stable bundles}
\author[P. Biswas]{Pritthijit Biswas} 
\address{The Institute of Mathematical Sciences (HBNI), C.I.T. Campus, Taramani, Chennai, 600113. Tamil Nadu, India.}
\email{pbiswas@imsc.res.in\\
pritthibis06@gmail.com}
\author[J.N. Iyer]{Jaya NN Iyer}
\address{The Institute of Mathematical Sciences (HBNI), C.I.T. Campus, Taramani, Chennai, 600113. Tamil Nadu, India.}
\email{jniyer@imsc.res.in}
\subjclass[1973]{14D20}
\keywords{Brill-Noether Loci for Moduli of vector bundles; Chow groups} 
\begin{abstract}
    Let $X$ be a smooth projective curve of genus $g$ over the field $\mathbb{C}$. Let $M_{X}(2,L)$ denote the moduli space of stable rank $2$ vector bundles on $X$ with fixed determinant $L$ of degree $2g-1$. Consider the Brill-Noether subvariety $W^{1}_{X}(2,L)$ of $M_{X}(2,L)$ which parametrises stable vector bundles having at least two linearly independent global sections. In this article, for generic $X$ and $L$, we show that $W^{1}_{X}(2,L)$ is stably-rational when $g=3$, unirational when $g=4$, and rationally chain connected by Hecke curves, when $g\geq 5$. We also show triviality of low dimensional rational Chow groups of an associated Brill Noether hypersurface.
\end{abstract}
\maketitle
\section{introduction}
Let $X$ be a smooth projective curve over $\mathbb{C}$ with genus $g\geq 2$. Let $L$ be a line bundle on $X$ of degree $d$. Let $n\geq 1$ be an integer. Let $M_{X}(n,L)$ denote the moduli space of stable bundles on $X$ of rank $n$ and fixed determinant $L$ and $M_{X}(n,d)$ denote the moduli space of stable bundles on $X$ of rank $n$ and degree $d$. 
It is known that $M_{X}(n,L)$ and $M_{X}(n,d)$ are smooth projective varieties of dimensions $(n^{2}-1)(g-1)$ and $n^{2}(g-1)+1$ respectively when $(n,d)=1$. 

There has been a considerable amount of study about non-emptiness, irreducibility and dimension of the {\it Brill-Noether subvarieties} $W^{r}_{X}(n,d)$ and $W^{r}_{X}(n,L)$ of $M_{X}(n,d)$ and $M_{X}(n,L)$ respectively (see \S 2 for notations, \cite{bgn},\cite{n1}, \cite{sundaram}, \cite{tb1} and \cite{tb2}). On the other hand, rationality of the stable moduli space with fixed determinant over a curve has also been well studied, the most well known results are due to Newstead, Schofield and King, see \cite{n2},\cite{n3} and \cite{sk}.

In this article, for small values of $g$, we study unirationality and stably-rationality of the variety $W^{1}_{X}(2,L)$ when $X$ and $L$ are both generic and $\deg(L)=d=2g-1$. 

 More precisely we obtain the following theorem:
\begin{theorem}
    Let $X$ be a smooth projective curve of genus $g\geq 2$ and $L$ be a fixed line bundle on $X$ of degree $2g-1$. Assume that $X$ and $L$ are both generic. Then the following are true:

    $(i)$ If $g=3$, then $W^{1}_{X}(2,L)$ is stably-rational.    
    
    $(ii)$ If $g=4$, then $W^{1}_{X}(2,L)$ is unirational.  
    
    $(iii)$ If $g\geq 5$, then $W^1_X(2,L)$ is rationally chain connected. It is connected by chain of Hecke curves.
\end{theorem}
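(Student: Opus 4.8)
The plan is to parametrise $W^1_X(2,L)$ by extensions, reducing parts $(i)$ and $(ii)$ to the geometry of an explicit determinantal hypersurface, while attacking $(iii)$ directly by Hecke curves. First I would note that since $\deg L=2g-1$ we have $\chi(E)=1$ for every $E\in M_X(2,L)$, so $E$ lies in $W^1_X(2,L)$ exactly when $h^0(E)\ge2$ (and then $h^1(E)\ge1$); moreover a general member of the pencil $\bp(H^0(E))$ is nowhere vanishing, because the two sections span a map $\co_X^{\oplus2}\to E$ dropping rank only along a divisor of $|L|$. Such a section presents $E$ as an extension $0\lr\co_X\lr E\lr L\lr 0$ with class $[e]\in\bp(\mathrm{Ext}^1(L,\co_X))=\bp(H^1(L^{-1}))\cong\bp^{3g-3}$, and the long exact sequence shows $h^0(E)\ge2$ iff the cup-product map $\partial_e\colon H^0(L)\to H^1(\co_X)$, $t\mapsto t\cup e$, is degenerate. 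As $\dim H^0(L)=\dim H^1(\co_X)=g$, this is the single equation $\det\partial_e=0$, so $W^1_X(2,L)$ is governed by the degree-$g$ determinantal hypersurface $\mathcal D:=\{\det\partial_e=0\}\subset\bp^{3g-3}$, the Brill--Noether hypersurface of the abstract. I would then check that $(E,[s])\mapsto[e]$ is birational between $\mathcal D$ and the $\bp^1$-bundle over $W^1_X(2,L)$ with fibre $\bp(H^0(E))$, the only delicate point being that a general $e\in\mathcal D$ yields a stable $E$; this holds because the unstable extensions sweep out a locus of dimension at most $2g-3<\dim\mathcal D=3g-4$. Thus $\mathcal D\sim_{\mathrm{bir}}W^1_X(2,L)\times\bp^1$.

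For the rationality statements I would study $\mathcal D$ through the incidence variety $I:=\{([t],[e])\in\bp(H^0(L))\times\bp(H^1(L^{-1})):t\cup e=0\}$. For fixed $t\ne0$ the sequence $0\lr L^{-1}\xrightarrow{\,t\,}\co_X\lr\co_{\mathrm{div}(t)}\lr 0$ gives $\Ker(\cup t)$ of dimension $2g-2$ independently of $t$, so $I$ is a $\bp^{2g-3}$-bundle over $\bp(H^0(L))=\bp^{g-1}$, hence rational, and its second projection is exactly $\mathcal D$. When $g=3$, $\mathcal D\subset\bp^6$ is a determinantal cubic fivefold whose general point has one-dimensional kernel, so $I\to\mathcal D$ is birational and $\mathcal D$ is rational; with $\mathcal D\sim W^1_X(2,L)\times\bp^1$ this proves $(i)$, stable rationality. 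When $g=4$, $\mathcal D\subset\bp^9$ is a determinantal quartic and $I\to\mathcal D$ is merely dominant, but rationality of $I$ already forces $\mathcal D$, and hence $W^1_X(2,L)$, to be unirational, giving $(ii)$.

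For $(iii)$ I would argue geometrically with Hecke curves so as to keep the connecting curves inside $W^1_X(2,L)$. The mechanism is that an upward elementary modification $E'\subset E$ with torsion quotient $\bc_x$ induces $H^0(E')\hookrightarrow H^0(E)$; hence if $E'\in W^1_X(2,L(-x))$, then every bundle in the Hecke pencil $\{E_\ell\}_{\ell\in\bp(E'_x)\cong\bp^1}$ containing $E'$ satisfies $h^0\ge h^0(E')\ge2$, so the whole rational curve lies in $W^1_X(2,L)$. Conversely, for general $E\in W^1_X(2,L)$ and any of the $2g-1$ points $x$ of the degeneracy divisor $\Delta\in|L|$ of the pencil $H^0(E)$, the two sections share a common value-line $\ell_x\subset E_x$, so $E':=\Ker(E\to E_x/\ell_x)$ lies in $W^1_X(2,L(-x))$ and $E$ sits on the corresponding $W^1$-Hecke curve. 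Letting $x$ range over $X$ and $E'$ over $W^1_X(2,L(-x))$ produces an irreducible family of rational curves in $W^1_X(2,L)$ dominating it; using non-emptiness and irreducibility of the lower loci $W^1_X(2,L(-x))$ (valid for generic $X,L$ once $g\ge5$, where the relevant Brill--Noether number is positive) together with irreducibility of $W^1_X(2,L)$, I would deduce that two general points are joined by a chain of such Hecke curves, establishing rational chain connectedness.

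The main obstacle is the global connectivity in $(iii)$: covering $W^1_X(2,L)$ by Hecke curves is comparatively soft, but threading them into an actual chain joining arbitrary points requires the lower loci $W^1_X(2,L(-x))$ to be non-empty and irreducible, and requires controlling stability along each entire Hecke $\bp^1$ (only finitely many modifications can fail to be stable, and these must be excised without disconnecting the chains). I expect this connectivity and irreducibility bookkeeping, rather than any single cohomological computation, to be the crux; by contrast the determinantal analysis behind $(i)$ and $(ii)$ is essentially formal once the birational identification $\mathcal D\sim W^1_X(2,L)\times\bp^1$ and the bundle structure of $I$ are in place.
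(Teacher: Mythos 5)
For parts $(i)$ and $(ii)$ your reduction is sound and your treatment of the hypersurface is genuinely different from the paper's --- and in fact sharper. The paper proves rationality of $\mathcal{H}$ for $g=3$ by projecting the singular cubic from a singular point, and for $g=4$ only unirationality, by showing every quartic in $\mathbb{P}^{9}$ contains a $\mathbb{P}^{2}$ and invoking Marchisio's result on Conte's conjecture. Your incidence variety $I=\{([t],[e]): t\cup e=0\}$ is the projectivization of the kernel of the bundle map $H^{1}(L^{-1})\otimes\mathcal{O}\rightarrow H^{1}(\mathcal{O})\otimes\mathcal{O}(1)$ over $\mathbb{P}(H^{0}(L))$, which is surjective on every fibre because $H^{1}(\mathcal{O}_{\mathrm{div}(t)})=0$; hence $I$ is a $\mathbb{P}^{2g-3}$-bundle over $\mathbb{P}^{g-1}$ and rational. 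Since $\dim I=3g-4=\dim\mathcal{H}$, $I$ is irreducible, the second projection is surjective, and its fibres are linear spaces $\mathbb{P}(\mathrm{Ker}\,\partial_{e})$, that projection is automatically birational for \emph{every} $g$ --- your hedge that for $g=4$ it is ``merely dominant'' is unnecessary. So your argument actually yields rationality of $\mathcal{H}$, hence stable rationality of $W^{1}_{X}(2,L)$, for all $g\geq 3$, strictly more than $(i)$ and $(ii)$. Two points you should make explicit: the nonemptiness, irreducibility and dimension $3g-5$ of $W^{1}_{X}(2,L)$ for generic $X,L$ (Teixidor i Bigas), which your dimension counts use silently, and the fact that the $\mathbb{P}^{1}$-fibration over the open locus $\{h^{0}(E)=2\}$ is the projectivization of the rank-two sheaf $p_{2*}\mathcal{E}^{0}$, hence Zariski-locally trivial; this is what legitimizes ``birational to $W^{1}_{X}(2,L)\times\mathbb{P}^{1}$.''

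Part $(iii)$ has a genuine gap. Producing an irreducible covering family of Hecke curves inside $W^{1}_{X}(2,L)$ does not imply rational chain connectedness: a ruled surface over a positive-genus curve is covered by an irreducible family of rational curves and is not rationally chain connected. Worse, through a general $E$ your construction produces only finitely many such curves: $E'=\mathrm{Ker}(E\rightarrow E_{x}/\ell)$ satisfies $h^{0}(E')\geq 2$ only when $x$ is one of the $2g-1$ points of the degeneracy divisor and $\ell=\ell_{x}$, so the locus reachable from $E$ by chains of length $n$ grows by at most one dimension per step, and nothing in your argument forces it to become dense in the $(3g-5)$-dimensional $W^{1}_{X}(2,L)$. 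This connectivity is precisely what you defer to ``bookkeeping,'' but it is the entire content of the statement. The paper sidesteps it by working upstairs: since $\deg\mathcal{H}=g$ and $\dim\mathbb{P}_{L}=3g-3\geq 2g$, the polar-hypersurface argument of \S 4.1 shows that \emph{any} two points of $\mathcal{H}$ lie on a connected chain of two lines contained in $\mathcal{H}$; lines map to Hecke curves under $\phi_{L}$, and the dominant map $\mathcal{H}\dashrightarrow W^{1}_{X}(2,L)$ transports the chains. Note also that your own rational incidence variety $I$ already gives rational chain connectedness of $\mathcal{H}$ (and hence of $W^{1}_{X}(2,L)$) for all $g$, though without the Hecke-curve refinement; if you want the Hecke-curve statement you need the polar-hypersurface construction or an equivalent substitute for the missing connectivity step.
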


Assume $\deg(L)=d=2g-1$. Let $\mathbb{P}_{L}$ denote the extension space $\mathbb{P}(Ext^{1}(L,\mathcal{O})^{*})$ and $\mathcal{H}$ denote the closed subscheme of $\mathbb{P}_{L}$ parameterising all extensions $0\rightarrow \mathcal{O}\rightarrow E\rightarrow L\rightarrow 0$ with $h^{0}(E)\geq 2$. Then $\mathcal{H}$ is a degree $g$ hypersurface of $\mathbb{P}_{L}$ \cite[Definition-Claim 4.6]{b}.

Then we prove the following.
\begin{theorem}
Let $X$ be a smooth projective curve of genus $g\geq 2$ and $L$ be a line bundle on $X$ of degree $2g-1$. Then the following holds:

If $g\geq 2$, then $CH_{0}(\mathcal{H})_{\mathbb{Q}}\cong \mathbb{Q}$ and if $g\in\{3,4,5,6\}$, then $CH_{1}(\mathcal{H})_{\mathbb{Q}}\cong \mathbb{Q}$.

\end{theorem}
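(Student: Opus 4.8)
The plan is to realize $\mathcal{H}$ as a hypersurface of small degree relative to its ambient projective space and then to read off the triviality of its low-dimensional rational Chow groups from the theory of Chow groups of hypersurfaces of small degree (Esnault--Levine--Viehweg, Otwinowska), powered by the Bloch--Srinivas decomposition of the diagonal. First I would fix the numerics. Since $\deg(L)=2g-1$, Serre duality gives $\dim Ext^{1}(L,\mathcal{O})=h^{1}(L^{-1})=3g-2$, so $\mathbb{P}_{L}\cong\mathbb{P}^{3g-3}$; as $\mathcal{H}$ is a degree $g$ hypersurface, $\dim\mathcal{H}=3g-4$. For generic $X$ and $L$ the determinantal hypersurface $\mathcal{H}=\{\det\phi_{e}=0\}$, where $\phi_{e}\colon H^{0}(L)\to H^{1}(\mathcal{O})$ is cup product with the extension class, is smooth, and I would first prove everything in this smooth case, postponing the possibly singular members to the end.

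The single input I would use repeatedly is the following triviality criterion: for a smooth hypersurface $X\subset\mathbb{P}^{N}$ of degree $d$ one has $CH_{i}(X)_{\mathbb{Q}}\cong\mathbb{Q}$ for all $i\le c$ as soon as $\binom{d}{c+1}\le N$. For $c=0$ this reduces to the Fano bound $d\le N$, expressing that $X$ is a Fano hypersurface, hence rationally connected, hence $CH_{0}$-trivial (Roitman/Campana--Kollár--Miyaoka--Mori); for $c\ge 1$ it is precisely the range in which the decomposition of the diagonal can be propagated through successive linear sections. Applying this with $c=0$, $d=g$, $N=3g-3$, the inequality $g\le 3g-3$ holds for every $g\ge 2$, giving $CH_{0}(\mathcal{H})_{\mathbb{Q}}\cong\mathbb{Q}$; equivalently $\mathcal{H}$ is Fano and therefore rationally connected.

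For $CH_{1}$ I would apply the criterion with $c=1$, where two conditions combine. The first is a dimension/cohomology requirement ensuring that $1$-cycles lie in the stable range: one needs $\dim\mathcal{H}=3g-4\ge 4$, i.e. $g\ge 3$, so that by the Lefschetz theorems $H^{2\dim\mathcal{H}-3}(\mathcal{H})\cong H^{3}(\mathcal{H})=0$ and there is no intermediate-Jacobian obstruction to the triviality of $1$-cycles. The second is the degree bound $\binom{g}{2}\le 3g-3$, equivalent to $(g-1)(g-6)\le 0$, i.e. $g\le 6$. Combining the two gives $CH_{1}(\mathcal{H})_{\mathbb{Q}}\cong\mathbb{Q}$ exactly for $g\in\{3,4,5,6\}$. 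I would stress that rational connectedness by itself is insufficient here: the cubic threefold is rationally connected yet has nontrivial $CH_{1}$, so the degree bound is genuinely needed to annihilate the Griffiths group, and it is this bound that forces the ceiling $g=6$ and excludes $g=2$ (where $\dim\mathcal{H}=2$ and $CH_{1}$ is the group of divisor classes).

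The main obstacle I anticipate is twofold. Cohomologically the sharp numeric must be verified carefully, since the ranges $g\ge 2$ and $3\le g\le 6$ sit exactly at the boundaries $\binom{g}{c+1}=3g-3$, so there is no slack and an off-by-one in the decomposition-of-the-diagonal induction would change the answer. More seriously, Theorem 2 is asserted for arbitrary $X$ and $L$, whereas $\mathcal{H}$ is a determinantal hypersurface whose corank-$\ge 2$ locus can be nonempty, so $\mathcal{H}$ may be singular; the smooth-hypersurface statements above then do not apply verbatim. I would handle this either by a specialization argument with $\mathbb{Q}$-coefficients from the generic smooth member, or by noting that the relevant decomposition of the diagonal can be produced directly from the family of linear subspaces sweeping $\mathcal{H}$, which is insensitive to the singularities of $\mathcal{H}$; making this last reduction rigorous for the singular fibers is the technical heart of the argument.
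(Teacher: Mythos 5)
Your overall route---identify $\mathcal{H}$ as a degree $g$ hypersurface in $\mathbb{P}_{L}\cong\mathbb{P}^{3g-3}$ and invoke the Esnault--Levine--Viehweg/Otwinowska circle of results on Chow groups of hypersurfaces of small degree---is the same as the paper's, and your numerology (triviality of $CH_{0}$ for all $g\geq 2$, of $CH_{1}$ exactly for $3\leq g\leq 6$) agrees with what the paper obtains. But there is a genuine gap, precisely where you flag it yourself: your key criterion is stated for \emph{smooth} hypersurfaces, whereas $\mathcal{H}$ is always singular (Lemma 3.3 of the paper: the determinantal hypersurface $\det(\lambda)=0$ is a singular hypersurface of degree $g$), and the theorem is asserted for arbitrary $X$ and $L$, so you cannot even restrict to a ``generic member'' of any family. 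Thus the smooth case you propose to treat first never actually occurs, and the singular case, which you defer as ``the technical heart,'' is the whole statement. Moreover, your first proposed repair---specialization with $\mathbb{Q}$-coefficients from a generic smooth member---goes in the wrong direction: the specialization map $CH_{*}(\mathcal{H}_{\eta})_{\mathbb{Q}}\rightarrow CH_{*}(\mathcal{H}_{0})_{\mathbb{Q}}$ need not be surjective, so triviality on the generic fibre does not descend to a special singular fibre (for $CH_{0}$ one can salvage this via rational chain connectedness of degenerations, but not for $CH_{1}$). Your second repair (build the decomposition of the diagonal directly from the linear spaces sweeping $\mathcal{H}$) is the right idea but is left unexecuted.

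The paper closes exactly this gap by citing the correct general form of the criterion: Otwinowska's Corollary 1 (Theorem 5.1 in the paper) applies to an \emph{arbitrary}, possibly singular, hypersurface $X\subseteq\mathbb{P}^{n}$ of degree $d$ and gives $CH_{k-1}(X)_{hom}\otimes\mathbb{Q}=0$ whenever $k(n-k+1)-\binom{d+k}{d}+1\geq 0$. With $(n,k,d)=(3g-3,1,g)$ this reads $2g-3\geq 0$, valid for all $g\geq 2$; with $(n,k,d)=(3g-3,2,g)$ it reads $6g-7-\binom{g+2}{2}\geq 0$, valid exactly for $g\in\{3,4,5,6\}$; one then concludes $CH_{k}(\mathcal{H})_{\mathbb{Q}}\cong\mathbb{Q}$ because the pushforward to $CH_{k}(\mathbb{P}^{3g-3})_{\mathbb{Q}}$ is surjective and is injective precisely when the homologically trivial part vanishes. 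Two smaller points: your inequality $\binom{d}{c+1}\leq N$ is not the standard bound (Otwinowska's is $\binom{d+k}{d}\leq k(n-k+1)+1$); the two happen to cut out the same set of genera here, but you should quote a precise reference. And your Lefschetz argument that $H^{3}(\mathcal{H})=0$ again presupposes smoothness and is not needed once the correct, singularity-insensitive criterion is used.
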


The proofs are based on analysing Bertram's work \cite{b} on the space of extensions $\mathbb{P}_{L}$ and the associated rational map $\phi_L:\mathbb{P}_{L} \dashrightarrow M_X(2,L)$.  The singular hypersurface $\mathcal{H}$ dominates $W^1_X(2,L)$ and is in fact generically a $\mathbb{P}^1$-fibration.  The study of $W^1_X(2,L)$ is reduced to analysing the hypersurface $\mathcal{H}$.

\section{Notations and Conventions}
In this paper we will use the following.

$(i)$ All schemes which are considered in this paper are over the field $\mathbb{C}$ of complex numbers.

$(ii)$ $X=$ Smooth projective irreducible curve of genus $g$ over $\mathbb{C}$.

$(iii)$  $L=$ Line bundle on $X$.

$(iv)$ $M_{X}(n,d)=$ Moduli space of stable vector bundles of rank $n$ and degree $d$ over $X$ when $n\geq 1$.


$(v)$  $M_{X}(n,L)=$ Moduli space of stable vector bundles of rank $n$ and determinant $L$ over $X$ when $n\geq 1$.


$(vi)$  $W^{r}_{X}(n,d):=\{E: E$ is stable of rank $n$ degree $d$ with $h^{0}(E)\geq r+1\}\subseteq M_{X}(n,d)$; 

$W^{r}_{X}(n,L):=\{E: E$ is stable of rank $n$ determinant $L$ with $h^{0}(E)\geq r+1\}\subseteq M_{X}(n,L)$ when $n\geq 1$. These are the well known Brill-Noether varieties, see \cite[\S1, 1.1]{bgn}. 

$(vii)$  The number $\rho^{r}_{n,d}$ will denote the expected dimension of $W^{r}_{X}(n,d)$ $(n\geq 1)$. It is given by
\[\rho^{r}_{n,d}= n^{2}(g-1)+1-(r+1)(r+1-d+n(g-1)).\]
The number $\rho^{r}_{n,L}$ will denote the expected dimension of $W^{r}_{X}(n,L)$ $(n\geq 1)$. It is given by
\[\rho^{r}_{n,L}:= (n^{2}-1)(g-1)-(r+1)(r+1-d+n(g-1)).\]

$(viii)$ A variety $V$ is {\it unirational} if there is a dominant rational map 
\[\phi: \mathbb{P}^{N}\dashrightarrow V\] for some non negative integer $N$. Moreover $V$ is {\it rational} if such a $\phi$ exists and is birational.

$(ix)$ A variety $V$ is {\it stably-rational} if $V\times \mathbb{P}^{m}$ is rational for some $m\in \mathbb{Z}_{\geq 0}$.

$(x)$ A variety $V$ is called {\it rationally connected} if any two general points of $V$ can be joined by an irreducible rational curve and more generally $V$ is called {\it rationally chain connected} if any two general points of $V$ can be joined by a finite chain of connected rational curves (see \cite{Kollar}).

$(xi)$  For a variety $V$ of dimension $n$ and any integer $k$ with $0\leq k\leq n$, the $k$-th {\it Chow group of $V$} (see \cite{f}) is defined by:
\[CH_{k}(V):=\mathbb{Z}\{W\subseteq V~ \closed ~\subvariety : \dim W=k\}/ \rational ~\equivalence.\]

\vspace{0.5em}
$(xii)$ For a variety $V$ of dimension $n$ and for any integer $k$ with $0\leq k\leq n$, we define: 

$CH^{k}(V):=CH_{n-k}(V)$.

$CH_{k}(V)_{\mathbb{Q}}:=CH_{k}(V)\otimes_{\mathbb{Z}} \mathbb{Q}$.

$CH^{k}(V)_{\mathbb{Q}}:=CH_{n-k}(V)_{\mathbb{Q}}$.

$CH^{k}(V)_{hom}:=\{W\in CH^{k}(X): W$ is homologous to zero\}.

\section{Rank 2 stable vector bundles and the extension map}

Let $X$ be a smooth projective connected curve of genus $g\geq 2$ over $\mathbb{C}$ and $L$ will be a fixed line bundle on $X$ of degree $2g-1$. We  denote the projective space $\mathbb{P}(Ext^{1}(L,\mathcal{O})^{*})$ by $\mathbb{P}_{L}$. 

Let $\pi_{X}$ and $\pi_{L}$ denote the projections from $X\times \mathbb{P}_{L}$ to $X$ and $\mathbb{P}_{L}$ respectively. Then there is a natural Poincar\'e extension on $X\times \mathbb{P}_{L}$, given by:
\begin{equation}\label{a}
    0\rightarrow \pi_{L}^{*}\mathcal{O}_{\mathbb{P}_{L}}(1)\rightarrow \mathcal{E}_{L}\rightarrow \pi_{X}^{*}L\rightarrow 0
    \end{equation}
    which satisfies the property that for any closed point $x\in \mathbb{P}_{L}$, the restriction of $(\ref{a})$ to $X\cong X\times x$ corresponds to $x$. In fact, one checks that under the isomorphism $H^{1}(X\times \mathbb{P}_{L}, \pi_{X}^{*}L^{*}\otimes \pi_{L}^{*}\mathcal{O}_{\mathbb{P}_{L}}(1))\cong H^{1}(X,L^{*})\otimes H^{1}(X,L^{*})^{*}$ given by the K\"unneth formula, the identity map from $H^{1}(X,L^{*})\rightarrow H^{1}(X,L^{*})$ corresponds to an extension with the above property. See \cite[Definition-claim 3.1]{b}. Thus there is a rational map $\phi_{L}:\mathbb{P}_{L}\dashrightarrow M_{X}(2,L)$.

    If $D$ is an effective divisor on $X$, then there is a natural inclusion $\mathcal{O}(-D)\hookrightarrow \mathcal{O}$. This induces a natural inclusion $L(-D)\hookrightarrow L$ which on dualising gives a natural map $h:L^{*}\rightarrow L^{*}(D)$. 
    We denote the {\it linear span} of an effective divisor $D$ by $\overline{D}$ and define it by (see \cite{ln})    \[\overline{D}:=\mathbb{P}(Ker(h_{*}:H^{1}(X,L^{*})\rightarrow H^{1}(X,L^{*}(D)))^{*})\subseteq \mathbb{P}_{L}.\]
    Note that $h_{*}: H^{1}(X,L^{*})\rightarrow H^{1}(X,L^{*}(D))$ is a surjection and hence $\dim(\overline{D})= h^{1}(L^{*})-h^{1}(L^{*}(D))-1$.

As $g\geq 2$ and $\deg(L)=2g-1$, $\deg(K\otimes L)\geq 2g+1$. Thus there is a closed immersion $\phi:X\hookrightarrow \mathbb{P}(H^{0}(X,K\otimes L)=Ext^{1}(L,\mathcal{O})^{*})$. 
    For any $r\geq 1$, $\Sec^{r}(X)$ will denote the {\it secant variety} of $(r-1)$-planes through $r$ points of $X$ inside $\mathbb{P}_{L}$, and is defined by 
\[\Sec^{r}(X):=\bigcup_{D\geq 0,~ degD=r}\overline{D}.\]

Note that $\Sec^{1}(X)=X$. Moreover for any $r\geq 1$, $\Sec^{r}(X)$ is at most of dimension $2r-1$ and there is an inclusion $\Sec^{r}(X)\subseteq \Sec^{r+1}(X)$.

\begin{lemma}\label{3.1}
The closed irreducible subvariety $\Sec^{g-1}(X)$ of $\mathbb{P}_{L}$ is precisely the locus of all extensions which determine rank $2$ bundles that are not stable.
    \end{lemma}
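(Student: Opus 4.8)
The plan is to translate both conditions---membership in $\Sec^{g-1}(X)$ and failure of stability---into a single cohomological criterion on the extension class, and then match them. First I would record the numerics. Since $\deg(L)=2g-1$ is odd, $\gcd(2,2g-1)=1$, so there are no strictly semistable rank $2$ bundles of determinant $L$; hence ``not stable'' coincides with ``unstable''. As $\mu(E)=(2g-1)/2=g-\tfrac12$, a rank $2$ bundle $E$ with $\det E=L$ is unstable precisely when it contains a sub-line-bundle $M$ with $\deg M\geq g$.

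Next I would fix an extension $0\rightarrow\mathcal{O}\rightarrow E\rightarrow L\rightarrow 0$ with class $e\in Ext^{1}(L,\mathcal{O})$. If $M\hookrightarrow E$ is a destabilizing sub-line-bundle, the composite $M\rightarrow E\rightarrow L$ cannot vanish (otherwise $M\hookrightarrow\mathcal{O}$ forces $\deg M\leq 0$); hence $M\cong L(-D)$ for an effective divisor $D$ with $\deg D=(2g-1)-\deg M\leq g-1$. Conversely, applying $\Hom(L(-D),-)$ to the extension produces the coboundary $\partial\colon\Hom(L(-D),L)\rightarrow Ext^{1}(L(-D),\mathcal{O})$, and the tautological inclusion $j\colon L(-D)\hookrightarrow L$ lifts to a map $L(-D)\rightarrow E$ exactly when $\partial(j)=0$. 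Since $\partial(j)=j^{*}(e)$, and under the identification $Ext^{1}(L,\mathcal{O})\cong H^{1}(X,L^{*})$ the map $j^{*}$ is precisely the map $h_{*}\colon H^{1}(X,L^{*})\rightarrow H^{1}(X,L^{*}(D))$ used to define the span, this is the condition $e\in\overline{D}$. Thus $E$ carries a copy of $L(-D)$ (of degree $(2g-1)-\deg D$) if and only if $e\in\overline{D}$.

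Combining these, $E$ is unstable iff there is an effective $D$ with $\deg D\leq g-1$ and $e\in\overline{D}$; using the nesting $\Sec^{r}(X)\subseteq\Sec^{r+1}(X)$ (equivalently, enlarging $D'$ to some $D\geq D'$ of degree exactly $g-1$ preserves the lift via $L(-D)\hookrightarrow L(-D')\hookrightarrow E$), this is equivalent to $e\in\bigcup_{\deg D=g-1}\overline{D}=\Sec^{g-1}(X)$, which is the asserted set-theoretic equality. For closedness and irreducibility I would realise $\Sec^{g-1}(X)$ as the image of the incidence variety $I=\{(D,e):e\in\overline{D}\}\subseteq \text{Sym}^{g-1}(X)\times\mathbb{P}_{L}$. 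Because $\deg(K\otimes L(-D))=3g-2>2g-2$, Riemann--Roch gives $h^{1}(L^{*}(D))=2g-1$ independently of $D$, so the kernels $Ker(h_{*})$ assemble into a rank $(g-1)$ bundle over the irreducible variety $\text{Sym}^{g-1}(X)$; hence $I$ is a projective bundle, irreducible, and its image under the proper second projection is the closed irreducible subvariety $\Sec^{g-1}(X)$.

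The routine parts are the numerics and the closure/irreducibility bookkeeping; the step deserving real care is the middle identification---verifying that the coboundary $\partial(j)=j^{*}(e)$ in the $Ext$ sequence is literally the map $h_{*}$ appearing in the definition of $\overline{D}$, and that a lift $L(-D)\rightarrow E$ of $j$ is automatically nonzero (hence its saturation is a genuine sub-line-bundle of degree $\geq g$). This dictionary between ``the inclusion $L(-D)\hookrightarrow L$ lifts to $E$'' and ``$e$ lies in the linear span $\overline{D}$'' is the crux, and once it is established both inclusions of the set equality follow immediately.
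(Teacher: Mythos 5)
Your proof is correct. The paper offers no argument of its own here---it simply cites Bertram \cite[p.~451]{b}---and what you have written out is precisely the standard Lange--Narasimhan/Bertram dictionary underlying that citation: the coboundary computation showing that $j\colon L(-D)\hookrightarrow L$ lifts to $E$ iff the class $e$ dies in $H^{1}(X,L^{*}(D))$, i.e.\ $e\in\overline{D}$, together with the numerics $\mu(E)=g-\tfrac12$ forcing a destabilizing sub-line-bundle to be of the form $L(-D)$ with $\deg D\leq g-1$, and the incidence-variety argument for closedness and irreducibility. Your supplied details (surjectivity of $h_{*}$, constancy of $h^{1}(L^{*}(D))$, and the nesting $\overline{D'}\subseteq\overline{D}$ for $D\geq D'$) are all consistent with the facts the paper records in \S 3.
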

    \begin{proof}
        See \cite[Pg. 451]{b}.
    \end{proof}
 
 Therefore we see that the domain of $\phi_{L}$ is $\mathbb{P}_{L}\backslash \Sec^{g-1}(X)$. We will denote the {\it indiscrepancy locus} of $\phi_{L}$ by $\mathcal{S}$ which is defined by :
\begin{equation}\label{bb}
\mathcal{S}=\Sec^{g-1}(X).
\end{equation}
Moreover $\phi_{L}$ can be resolved into a birational morphism from a suitable variety $\widetilde{\mathbb{P}}_{L}$ onto $M_{X}(2,L)$ by a sequence of blow-ups commencing from blowing up the curve $X$ in $\mathbb{P}_{L}$. More precisely we have the following theorem of Bertram.

\begin{theorem}\label{5}\cite[Theorem 1]{b}
Let $X$ be a smooth projective curve of genus $g\geq 2$ and $L$ be a line bundle on $X$ of degree $2g-1$. Then the following are true:

$(i)$ There are  blow-ups $BL_{r}(\mathbb{P}_{L})\rightarrow BL_{r-1}(\mathbb{P}_{L})$ along smooth centres  with exceptional divisor $E_{r}$ for all $1\leq r\leq g-1$, with $BL_{0}(\mathbb{P}_{L})=\mathbb{P}_{L}$ and $BL_{1}(\mathbb{P}_{L})\rightarrow \mathbb{P}_{L}$ is the blow-up of $\mathbb{P}_{L}$ along $X$.

$(ii)$ There is a morphism $\Phi_{L}:\widetilde{\mathbb{P}}_{L}\rightarrow M_{X}(2,L)$ such that as rational maps $\Phi_{L}=\phi_{L}\circ \sigma$, where $\sigma:\widetilde{\mathbb{P}}_{L}\rightarrow \mathbb{P}_{L}$ is the successive composition of the blow-up maps in $(i)$. Moreover if for every $r$ with $1\leq r\leq g-1$, $\widetilde{E}_{r}$ denotes the inverse image of $E_{r}$ under the composition of the successive blow-ups above $BL_{r}(\mathbb{P}_{L})$ in $\widetilde{\mathbb{P}}_{L}$, then $\sigma(\widetilde{E}_{r})\subseteq \Sec^{r}(X)$, $\widetilde{E}_{1}\cup...\cup\widetilde{E}_{g-2}\cup {E}_{g-1}=\sigma^{-1}(\Sec^{g-1}(X))$ and $\sigma_{|U_{L}}:U_{L}\rightarrow \mathbb{P}_{L}\backslash \Sec^{g-1}(X)$ is an isomorphism, where $U_{L}=\widetilde{\mathbb{P}}_{L}\backslash \widetilde{E}_{1}\cup...\cup\widetilde{E}_{g-2}\cup {E}_{g-1}$.

$(iii)$ If $D$ is an effective divisor and $x\in\overline{D}$ is not in the span of any proper subdivisor of $D$, then there is a natural isomorphism $\sigma^{-1}(x)\cong \widetilde{\mathbb{P}}_{L(-2D)}$ and when restricted to $\sigma^{-1}(x)$, $\Phi_{L}$ coincides with the map 
\[\Phi_{L(-2D)}\otimes \mathcal{O}(D): \widetilde{\mathbb{P}}_{L(-2D)}\rightarrow M_{X}(2,L(-2D))\rightarrow M_{X}(2,L).\]

$(iv)$ There is a universal bundle $\widetilde{\mathscr{E}}_{L}$ on $\widetilde{\mathbb{P}}_{L}\times X$ which realises the map $\Phi_{L}$. 
\end{theorem}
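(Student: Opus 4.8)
The plan is to prove all four parts together by induction on the integer $h$ defined by $\deg L=2h-1$ (so here $h=g$, while the underlying curve always has genus $g$), exploiting the self-similar structure that part $(iii)$ itself predicts: the generic fibre of each blow-up is a copy of $\widetilde{\mathbb{P}}_{L(-2D)}$, where $\deg L(-2D)=2(h-r)-1$ has strictly smaller $h$-invariant. The base case is $h=1$: then $\deg L=1$, every nonzero class in $\mathbb{P}_{L}=\mathbb{P}(Ext^{1}(L,\mathcal{O})^{*})$ yields a stable bundle, the unstable locus $\Sec^{0}(X)$ is empty, no blow-up is needed, and $\phi_{L}$ is already the desired morphism. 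First I would fix the dictionary between $\mathbb{P}_{L}$ and extensions. A class $e\in Ext^{1}(L,\mathcal{O})=H^{1}(X,L^{*})$ determines $0\rightarrow\mathcal{O}\rightarrow E\rightarrow L\rightarrow 0$, and for an effective divisor $D$ the inclusion $L(-D)\hookrightarrow L$ lifts to a subsheaf $L(-D)\hookrightarrow E$ exactly when $h_{*}(e)=0$, i.e. when $[e]\in\overline{D}$, where $h_{*}$ is the map used to define $\overline{D}$. Since $\deg L(-D)=2g-1-\deg D$ exceeds the stability bound $g-1$ precisely when $\deg D\leq g-1$, this reproves Lemma \ref{3.1} and presents the indeterminacy locus $\mathcal{S}=\Sec^{g-1}(X)$ of $(\ref{bb})$ as stratified by $\Sec^{1}(X)\subset\cdots\subset\Sec^{g-1}(X)$. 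The combinatorial input I would record is that a point of the open stratum $\Sec^{r}(X)\setminus\Sec^{r-1}(X)$ lies on a unique $\overline{D}$ with $\deg D=r$ and in the span of no proper subdivisor; this $D$ is the divisor along which the resolving modification is performed.

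Next I would construct the blow-ups and verify $(i)$, proving smoothness of the centres by induction on $r$. Blowing up $X=\Sec^{1}(X)$ is harmless; the real content is that the proper transform of $\Sec^{r+1}(X)$ in $BL_{r}(\mathbb{P}_{L})$ is smooth once $\Sec^{r}(X)$ has been blown up. I would prove this via a local model along the exceptional divisor $E_{r}$, which over the honest stratum fibres over the parameter space of degree-$r$ divisors $D$; the tangent-cone computation of $\Sec^{r+1}(X)$ along $\Sec^{r}(X)$, controlled by the secant geometry of $X$ in its embedding by $|K\otimes L|$ (where $\deg(K\otimes L)=4g-3$ is large), shows that one blow-up separates the two secants and leaves a smooth proper transform. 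The inclusions $\sigma(\widetilde{E}_{r})\subseteq\Sec^{r}(X)$, the identity $\widetilde{E}_{1}\cup\cdots\cup\widetilde{E}_{g-2}\cup E_{g-1}=\sigma^{-1}(\Sec^{g-1}(X))$, and the fact that $\sigma$ is an isomorphism over $\mathbb{P}_{L}\setminus\Sec^{g-1}(X)$ then follow by bookkeeping of proper transforms, since nothing outside $\Sec^{g-1}(X)$ is ever blown up, which settles $(i)$ and the remaining assertions of $(ii)$.

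To upgrade $\phi_{L}$ to the morphism $\Phi_{L}$ and to identify it on the exceptional fibres, I would show that the blow-up resolves the destabilising subbundle by an elementary modification: over the honest stratum of $\widetilde{E}_{r}$ one replaces the unstable $E$, which sits in $0\rightarrow L(-D)\rightarrow E\rightarrow\mathcal{O}(D)\rightarrow 0$, by the flipped bundle $F\otimes\mathcal{O}(D)$ with $F$ an extension $0\rightarrow\mathcal{O}\rightarrow F\rightarrow L(-2D)\rightarrow 0$, so that the destabilising sub becomes the quotient and $\det(F\otimes\mathcal{O}(D))=L$ again. This produces a well-defined stable bundle at every point of $\widetilde{\mathbb{P}}_{L}$, hence the morphism $\Phi_{L}$, and simultaneously realises $\sigma^{-1}(x)\cong\mathbb{P}_{L(-2D)}$ over honest $x$, with $\Phi_{L}$ restricting to $\Phi_{L(-2D)}\otimes\mathcal{O}(D)$; resolving the sub-degenerate positions of $x$ recursively replaces $\mathbb{P}_{L(-2D)}$ by $\widetilde{\mathbb{P}}_{L(-2D)}$, which is exactly statement $(iii)$ and is where the inductive hypothesis on the smaller invariant $h-r$ enters. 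Finally, for $(iv)$ I would assemble $\widetilde{\mathscr{E}}_{L}$ by pulling back the Poincar\'e extension $\mathcal{E}_{L}$ of $(\ref{a})$ to $\widetilde{\mathbb{P}}_{L}\times X$ and performing the same family of elementary modifications along each $\widetilde{E}_{r}\times X$; these glue to a global bundle precisely because the centres were separated into smooth pieces in the previous step.

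The main obstacle is the smoothness-of-centres step together with the uniform local identification of the exceptional fibres: one must control, simultaneously for all $r$, how $\Sec^{r+1}(X)$ degenerates along $\Sec^{r}(X)$ and confirm that a single blow-up separates them with smooth proper transform, which rests on sharp secant-variety estimates for $X$ in $|K\otimes L|$. One must also check that the flip $E\mapsto F\otimes\mathcal{O}(D)$ really yields a locally free stable bundle and depends algebraically on the blown-up parameters. Once this local normal form is in hand, resolution of the rational map, the fibrewise description $(iii)$, and the construction of the universal bundle $(iv)$ are all driven by the single two-fold Hecke flip and close formally under the induction on $h$.
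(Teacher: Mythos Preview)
The paper does not give its own proof of this theorem: it is quoted verbatim from Bertram with the citation \cite[Theorem 1]{b} and used as a black box, so there is no argument in the paper to compare your proposal against.

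That said, your sketch is a faithful outline of the architecture of Bertram's actual proof: the stratification of the indeterminacy locus by the secant varieties $\Sec^{r}(X)$, the resolution by successive blow-ups along these strata, the elementary-modification (Hecke flip) interpretation of the exceptional fibres, and the recursive identification $\sigma^{-1}(x)\cong\widetilde{\mathbb{P}}_{L(-2D)}$ with the induction on the degree parameter are exactly the ingredients Bertram uses. Your framing of the induction on $h$ with $\deg L=2h-1$ while the genus stays fixed is the right way to make the recursion in $(iii)$ work, since $\deg L(-2D)=2(h-r)-1$. The step you correctly identify as the main obstacle, namely the smoothness of the successive centres and the local normal form of $\Sec^{r+1}(X)$ along $\Sec^{r}(X)$, is precisely where the technical weight of Bertram's paper lies, and your proposal does not supply those details; so as a standalone proof it remains a high-level plan rather than a complete argument. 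For the purposes of the present paper, though, only the citation is required.
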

Furthermore $\Phi_{L}$ is surjective, all the fibres of $\Phi_{L}$ are connected and there is an isomorphism ${\Phi_{L}}_{*}\mathcal{O}_{\widetilde{\mathbb{P}}_{L}}\cong \mathcal{O}_{M_{X}(2,L)}$ \cite[ Proposition $(4.5)$]{b}. Therefore as $\widetilde{\mathbb{P}}_{L}$ and $M_{X}(2,L)$ are of the same dimension $3g-3$, $\Phi_{L}$ is birational. Thus by Theorem $(\ref{5})(iii)$, $\phi_{L}:\mathbb{P}_{L}\backslash \Sec^{g-1}(X)\rightarrow M_{X}(2,L)$ is also birational yielding the rationality of $M_{X}(2,L)$. 

On pushing down the Poincar\'e extension $(1)$ to $\mathbb{P}_{L}$, we get the following exact sequence of sheaves on $\mathbb{P}_{L}$.
    \begin{equation}\label{b}
          0\rightarrow \mathcal{O}_{\mathbb{P}_{L}}(1)\rightarrow {\pi_{L}}_{*}\mathcal{E}_{L}\rightarrow H^{0}(X,L)\otimes \mathcal{O}_{\mathbb{P}_{L}}\overset{\lambda}\longrightarrow H^{1}(X,\mathcal{O})\otimes \mathcal{O}_{\mathbb{P}_{L}}(1)
      \end{equation}
      \[\rightarrow R^{1}{\pi_{L}}_{*}\mathcal{E}_{L}\rightarrow H^{1}(X,L)\otimes \mathcal{O}_{\mathbb{P}_{L}}\rightarrow 0\]
      
      As $\deg(L)=2g-1$, the map $\lambda$ in $(\ref{b})$ is a $g\times g$ matrix of linear forms on $\mathbb{P}_{L}$. Let $\mathcal{H}$ be the closed subscheme of $\mathbb{P}
_{L}$ determined by the ideal sheaf locally generated by $\det(\lambda)$. Therefore $\mathcal{H}=\{[[0\rightarrow \mathcal{O}\rightarrow E\rightarrow L\rightarrow 0]]\in \mathbb{P}_{L}| h^{0}(E)\geq 2\} \subset \mathbb{P}_L$. 

\begin{lemma}\label{3.3}
$\mathcal{H}$ is not a cone and is a singular hypersurface of degree $g$ in $\mathbb{P}_{L}$.
\end{lemma}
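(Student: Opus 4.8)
The statement bundles together three assertions about $\mathcal{H}=\{\det\lambda=0\}$: it has degree $g$, it is singular, and it is not a cone. The plan is to read all three off the determinantal description coming from the sequence $(\ref{b})$, exploiting that $\lambda$ is a $g\times g$ matrix of linear forms and that $\lambda(e)$ is, fibrewise, the connecting map $\delta_e\colon H^0(L)\to H^1(\mathcal{O})$.

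\emph{Degree.} First I record that $h^0(X,L)=g$ and $h^1(X,\mathcal{O})=g$ by Riemann--Roch (using $\deg L=2g-1>2g-2$, so $h^1(L)=0$); thus $\lambda$ in $(\ref{b})$ is a $g\times g$ matrix whose entries are linear forms, and $\det\lambda$ is homogeneous of degree $g$. To see this genuinely cuts out a hypersurface, i.e.\ $\det\lambda\not\equiv 0$, I use that $\phi_L$ dominates $M_X(2,L)$ and that the generic stable bundle $E$ has $h^1(E)=0$, hence $h^0(E)=\chi(E)=1<2$; so the generic point of $\mathbb{P}_L$ lies off $\mathcal{H}$. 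This is the content of \cite[Definition-Claim 4.6]{b}.

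\emph{Singular.} Here I would use the elementary identity $\partial_k\det\lambda=\mathrm{tr}\big(\mathrm{adj}(\lambda)\,\partial_k\lambda\big)$: at any point $e$ with $\rank\lambda(e)\le g-2$ the adjugate $\mathrm{adj}(\lambda(e))$ vanishes, so every partial of $\det\lambda$ vanishes and $e$ is a singular point of $\mathcal{H}$. Since $\rank\lambda(e)=\dim\im\,\delta_e=h^0(E)-1$, the condition $\rank\lambda(e)\le g-2$ is exactly $h^0(E)\ge 3$. To produce such a point for $g\ge 3$ I take a general effective divisor $D$ of degree $g-2$, so that $h^0(L(-D))=2$, and any $e\in\overline{D}$: because $e$ maps to $0$ in $H^1(L^*(D))$, functoriality of cup product under $L(-D)\hookrightarrow L$ gives $H^0(L(-D))\subseteq\ker\delta_e$, whence $h^0(E)\ge 1+2=3$. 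Thus $\mathcal{H}$ is singular along (at least) $\Sec^{g-2}(X)$. I would flag that $g=2$ is genuinely exceptional: there $\mathbb{P}_L=\mathbb{P}^3$, the multiplication map is an isomorphism, and $\det\lambda$ becomes the smooth quadric $t_1t_4-t_2t_3$, so the ``singular'' assertion should be understood for $g\ge 3$.

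\emph{Not a cone.} The hypersurface $V(\det\lambda)$ is a cone with vertex $[v_0]$ if and only if the directional derivative $\partial_{v_0}\det\lambda\equiv 0$, equivalently $\det\lambda(v+sv_0)$ is independent of $s$ for all $v$. I would first note that $e\mapsto\lambda(e)$ is a linear injection, because its vanishing is the annihilator of the image of the multiplication map $\mu\colon H^0(K)\otimes H^0(L)\to H^0(K\otimes L)$ (the entries of $\lambda$ being the products $s_i\omega_j$), and $\mu$ is surjective for generic $X,L$; hence $B:=\lambda(v_0)\neq 0$. Then $\det(\lambda(v)+sB)$ should be a polynomial of degree $\rank(B)\ge 1$ in $s$ for generic $v$, contradicting $s$-independence. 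The main obstacle is precisely this last point: one must rule out that the leading $s$-coefficient, a suitable $\rank(B)\times\rank(B)$ minor, vanishes identically on the linear system $\im(\lambda)\subseteq\mathrm{Mat}_g$ --- equivalently that $\lambda(v)^{-1}B$ is nilpotent for every invertible $\lambda(v)$, equivalently that no nonzero $B\in\im(\lambda)$ is trace-orthogonal to all the adjugates $\mathrm{adj}(\lambda(v))$. I expect to resolve this from the non-degeneracy built into $\mu$: the case $g=2$, where the entries of $\lambda$ are a basis of $H^0(K\otimes L)$ and $\det\lambda=t_1t_4-t_2t_3$ is vertex-free, is the prototype, and for $g\ge 3$ the corroborating geometric input is that $\Sing(\mathcal{H})\supseteq\Sec^{g-2}(X)\supseteq X$ contains the nondegenerate curve $X$, while the singular locus of any cone is again a cone stable under its ruling --- so the presence of a nondegenerate positive-genus curve in $\Sing(\mathcal{H})$ obstructs the cone structure (this settles $g=3$ outright and guides the general verification).
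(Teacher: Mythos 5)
Your treatment of the degree and of the singularity is essentially correct and is considerably more self-contained than the paper's, which refers both assertions to Bertram (p.\ 464): the computation $h^{0}(L)=h^{1}(\mathcal{O})=g$, the criterion $\partial_k\det\lambda=\mathrm{tr}(\mathrm{adj}(\lambda)\partial_k\lambda)$ forcing singularity wherever $\rank\lambda\le g-2$, and the production of such points on $\overline{D}$ for $D$ of degree $g-2$ with $h^{0}(L(-D))=2$ are all sound. Your caveat about $g=2$ (where $\det\lambda$ is a rank-four quadric, hence smooth) is a legitimate observation that the ``singular'' assertion must be read with $g\ge 3$; the paper only uses singularity for $g=3$, so nothing downstream is affected.

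The genuine gap is in the ``not a cone'' part, and you have correctly located it but not closed it. (The paper's own argument here is the one-line claim that non-degeneracy of $\lambda$ implies $\mathcal{H}$ is not a cone; as you note, injectivity of $e\mapsto\lambda(e)$ only excludes the trivial vertex with $\lambda(v_{0})=0$, and does not by itself rule out that $\lambda(v)^{-1}\lambda(v_{0})$ is nilpotent for every invertible $\lambda(v)$.) Your proposed geometric substitute --- that $\mathrm{Sing}(\mathcal{H})$ contains the nondegenerate curve $X$, while the singular locus of a cone is again a cone --- does not settle the matter even for $g=3$: the singular locus of a cone with vertex $p$ is a cone with vertex $p$, and such a cone can perfectly well contain a nondegenerate positive-genus curve (take the cone over any hypersurface whose singular locus is such a curve). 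What the containment $X\subseteq\mathrm{Sing}(\mathcal{H})$ actually yields is: if $\mathcal{H}$ were a cone with vertex $p$, then $\mathrm{Sing}(\mathcal{H})$ would contain the join of $p$ with $X$, which is $2$-dimensional; so the argument closes only if one also proves $\dim\mathrm{Sing}(\mathcal{H})\le 1$ for $g=3$ (i.e.\ that the singular locus is no larger than $\Sec^{1}(X)=X$). That upper bound is exactly what your rank analysis does not give, since $\mathrm{Sing}(\mathcal{H})$ may a priori exceed the rank-$\le g-2$ locus. To complete the proof you must either establish such a dimension bound on $\mathrm{Sing}(\mathcal{H})$ (or quote it from Bertram), or directly rule out the nilpotency degeneration for the pencils $\lambda(v)+s\lambda(v_{0})$ using the structure of the multiplication map $H^{0}(K)\otimes H^{0}(L)\to H^{0}(K\otimes L)$; as written, neither step is carried out.
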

\begin{proof}
    The map $\lambda$ in $(\ref{b})$ is non degenerate and hence $\mathcal{H}$ is not a cone. Other assertions follow from \cite[Pg. 464]{b}.
\end{proof}

Our goal now is to relate the Brill-Noether Locus $W^{1}_{X}(2,L)$, with the hypersurface $\mathcal{H}$ via the birational map $\phi_{L}$ and then deduce information about it from that of $\mathcal{H}$.

Recall that $\mathcal{S}$ is the indiscrepancy locus of $\phi_{L}$ as defined in $(\ref{bb})$. We have the following proposition.
\begin{proposition}\label{35}
    Assume that $X$ and $L$ are generic. Then the following are true:
    
    $(i)$ $\mathcal{H}\nsubseteq \mathcal{S}$.

    $(ii)$ $W^{1}_{X}(2,L)$ is irreducible of codimension $2$ in $M_{X}(2,L)$ and ${\phi_{L}}_{|\mathcal{H}}:\mathcal{H}\dashrightarrow W^{1}_{X}(2,L)$ is dominant.

    $(iii)$ When $g\geq 3$, there is an open subset $W^{0}\subseteq W^{1}_{X}(2,L)$ such that $\mathcal{H}^{0}:=\phi_{L}^{-1}(W^{0})\rightarrow W^{0}$ is a $\mathbb{P}^{1}$-fibration.
\end{proposition}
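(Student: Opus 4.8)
The plan is to read everything off the extension description of points of $\mathbb{P}_L$ together with the fibre structure of $\phi_L|_{\mathcal{H}}$, treating the three parts in order.

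For $(i)$ I would argue by a dimension count that needs no genericity. Since $\deg(K\otimes L)=4g-3$, Serre duality and Riemann--Roch give $h^0(K\otimes L)=3g-2$, so $\dim\mathbb{P}_L=3g-3$ and, being a hypersurface, $\mathcal{H}$ has pure dimension $3g-4$. On the other hand $\mathcal{S}=\Sec^{g-1}(X)$ has dimension at most $2(g-1)-1=2g-3$. As $3g-4>2g-3$ for every $g\geq2$, no irreducible component of $\mathcal{H}$ lies in $\mathcal{S}$, so $\mathcal{H}\nsubseteq\mathcal{S}$ and $\phi_L|_{\mathcal{H}}$ is defined on the dense open set $\mathcal{H}\setminus\mathcal{S}$.

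For $(ii)$ I would note first that a point of $\mathcal{H}\setminus\mathcal{S}$ is an extension $0\to\mathcal{O}\to E\to L\to0$ with $E$ stable and $h^0(E)\geq2$, whence $\phi_L(\mathcal{H}\setminus\mathcal{S})\subseteq W^1_X(2,L)$; the content is the computation of the generic fibre. Over a point $[E]$ the fibre of $\phi_L|_{\mathcal{H}}$ is exactly the set of nowhere-vanishing sections of $E$ taken up to scalar, i.e. an open subset of $\mathbb{P}(H^0(E))$, because a subbundle inclusion $\mathcal{O}\hookrightarrow E$ with locally free quotient (necessarily $L$, as $\det E=L$) is the same datum as a section with no zeros. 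Since $\lambda$ in $(\ref{b})$ is a $g\times g$ matrix and $\mathcal{H}=\{\det\lambda=0\}$ has generic corank one for generic $X$ and $L$, the generic $[E]$ satisfies $h^0(E)=2$ and its fibre is an open subset of $\mathbb{P}^1$, of dimension $1$. Hence $\dim\overline{\phi_L(\mathcal{H})}=(3g-4)-1=3g-5=\rho^{1}_{2,L}$. Invoking that, for generic $X$ and $L$, $W^1_X(2,L)$ is irreducible of dimension exactly $\rho^{1}_{2,L}$ while every component has dimension at least $\rho^{1}_{2,L}$ (Brill--Noether theory for fixed determinant, cf. \cite{tb1},\cite{tb2},\cite{bgn}), the irreducible subvariety $\overline{\phi_L(\mathcal{H})}$ must be all of $W^1_X(2,L)$. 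This gives both dominance of $\phi_L|_{\mathcal{H}}$ and codimension $(3g-3)-(3g-5)=2$.

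For $(iii)$ I would take $W^0\subseteq W^1_X(2,L)$ to be the locus with $h^0(E)=2$, which for $g\geq3$ is open and dense because its complement $W^2_X(2,L)$ has the strictly smaller expected dimension $\rho^{2}_{2,L}=3g-9$. As $\gcd(2,2g-1)=1$ there is a Poincar\'e bundle $\mathcal{E}$ on $M_X(2,L)\times X$, and over $W^0$ the direct image $\mathcal{V}:=(p_M)_*\mathcal{E}$ is locally free of rank $2$ by cohomology and base change, so $\mathbb{P}(\mathcal{V})\to W^0$ is a genuine $\mathbb{P}^1$-bundle with fibre $\mathbb{P}(H^0(E))$ over $[E]$. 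The fibre of $\phi_L|_{\mathcal{H}}$ over $[E]$ is the complement in this $\mathbb{P}^1$ of the finitely many sections acquiring a zero; as rational maps $\mathbb{P}^1\dashrightarrow\mathbb{P}_L$ extend to morphisms, these missing points map into $\mathcal{S}$ and are exactly what Bertram's blow-ups along the secant varieties (Theorem \ref{5}) resolve. Thus, after passing to $\widetilde{\mathbb{P}}_L$, the strict transform of $\mathcal{H}$ over $W^0$ is identified with $\mathbb{P}(\mathcal{V})$ and $\mathcal{H}^0\to W^0$ becomes an honest $\mathbb{P}^1$-fibration.

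The main obstacle is concentrated in the genericity hypotheses and splits into two points. First, one must pin down the irreducibility and expected dimension of the fixed-determinant locus $W^1_X(2,L)$ for $L$ generic of odd degree $2g-1$, either by a precise citation or by a specialisation argument; without it the fibre count only yields dominance onto a single $(3g-5)$-dimensional component. Second, upgrading the birational $\mathbb{P}^1$-fibration of $(iii)$ to an honest $\mathbb{P}^1$-bundle requires genuinely controlling the sections of $E$ with base points: one has to verify that in each fibre these finitely many sections are precisely the points absorbed by the secant blow-ups of Theorem \ref{5}, so that $\mathbb{P}(\mathcal{V})$ and the strict transform of $\mathcal{H}$ coincide over $W^0$ rather than merely agreeing on a dense open subset.
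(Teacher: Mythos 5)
Parts $(i)$ and $(ii)$ of your proposal match the paper's proof. For $(i)$ you use the identical dimension count $\dim\mathcal{H}=3g-4>2g-3\geq\dim \Sec^{g-1}(X)$. For $(ii)$ both you and the authors import irreducibility and the expected dimension $3g-5$ of $W^{1}_{X}(2,L)$ from Teixidor i Bigas (\cite{tb1}, \cite{tb2}) and combine it with the fact that the fibre of $\phi_L|_{\mathcal{H}}$ over a generic $[E]$ sits inside $\mathbb{P}(H^{0}(E))=\mathbb{P}^{1}$; your fibre-dimension bookkeeping is, if anything, more explicit than the paper's one-line assertion of dominance. Your claim that $\lambda$ has generic corank one on $\mathcal{H}$ is exactly the statement that $h^{0}(E)=2$ for generic $E$, which the paper extracts from the proof of \cite{tb1}; either way it is an external input, and you correctly flag it as such.

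The genuine gap is in $(iii)$. The proposition concerns $\mathcal{H}^{0}=\phi_L^{-1}(W^{0})\subseteq\mathcal{H}$ itself, but you only show that its fibres are open subsets of $\mathbb{P}^{1}$, possibly missing the sections with zeros, and you then pass to the strict transform of $\mathcal{H}$ in $\widetilde{\mathbb{P}}_L$ and assert without justification that it coincides with $\mathbb{P}(\mathcal{V})$ over $W^{0}$. This both changes the object in the statement and leaves the key identification unverified (a priori the strict transform picks up pieces of the exceptional divisors over $W^{0}$). The paper closes the gap with one further dimension count, not a resolution: a section of a stable $E$ vanishing on a divisor $D$ forces $\deg D\leq g-1$ by stability, so the corresponding extension class lies in $\overline{D}\subseteq\Sec^{g-1}(X)=\mathcal{S}$; if every $E$ in a dense subset of $W^{1}_{X}(2,L)$ admitted such a section, the resulting points would force $\dim(\mathcal{S}\cap\mathcal{H})\geq\dim W^{1}_{X}(2,L)=3g-5$, contradicting $\dim\mathcal{S}\leq 2g-3$ once $g\geq 3$. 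Hence for generic $E$ every section is nowhere vanishing, the entire line $e(\mathbb{P}(H^{0}(E)^{*}))$ avoids $\mathcal{S}$, and after shrinking $W^{0}$ the fibres of $\mathcal{H}^{0}\rightarrow W^{0}$ are already complete $\mathbb{P}^{1}$'s, with no blow-up needed. You should replace your strict-transform step with this argument.
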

\begin{proof}
$(i)$: If $\mathcal{H}\subseteq \mathcal{S}$, then $\dim(\mathcal{H})\leq \dim(\mathcal{S})$. As $\mathcal{S}=\Sec^{g-1}(X)$, $\dim(\mathcal{S})\leq 2g-3$ but $\dim(\mathcal{H})=3g-4$. This is a contradiction as $g\geq 2$.

$(ii)$: Recall from \cite[Theorem]{tb1} that if $X$ is generic, then $W^{1}_{X}(2,2g-1)$ is an irreducible variety of the expected dimension $4g-5$. Furthermore from \cite[Theorem $(1.1)$]{tb2} if $L$ is generic and $\deg(L)=2g-1$ then we deduce that $W^{1}_{X}(2,L)$ is non empty, irreducible and has the expected dimension $3g-5$. In other words, the codimension of $W^{1}_{X}(2,L)$ in $M_{X}(2,L)$ is $2$. 

The proof of  \cite[Theorem]{tb1} shows that a generic $E$ in $W^{1}_{X}(2,L)$ satisfies $h^{0}(E)=2$ and in fact the  hypersurface $\mathcal{H}$ contains an extension class $[0\rightarrow \mathcal{O}\rightarrow E\rightarrow L\rightarrow 0]$ with $E$ being a stable rank $2$ vector bundle with $h^{0}(E)=2$.   This implies that ${\phi_{L}}_{|\mathcal{H}}:\mathcal{H}\dashrightarrow W^{1}_{X}(2,L)$ is dominant.

 $(iii)$: Using \cite[Lemma $(4.1)$]{b}, when $E$ is a rank $2$ stable vector bundle on $X$ with $h^{0}(E)=2$, there is a rational injective map 
\[e:\mathbb{P}(H^{0}(X,E)^{*})\dashrightarrow \mathbb{P}_{L}\]
given by $s\mapsto [0\rightarrow \mathcal{O}\stackrel{s}{\rightarrow} E\rightarrow L\rightarrow 0]$. Since $\mathbb{P}(H^{0}(X,E)^{*})=\mathbb{P}^{1}$, this map $e$ is in fact a morphism. 

Furthermore, $e(\mathbb{P}(H^{0}(X,E)^{*}))$ does not intersect $\mathcal{S}$ for a generic $E$ in $W^{1}_{X}(2,L)$. Otherwise if $\mathcal{S}$ intersects each $e(\mathbb{P}(H^{0}(X,E)^{*}))$, then $\dim(\mathcal{S}\cap \mathcal{H}) \geq \dim(W^{1}_{X}(2,L))=3g-5$. However $\dim(\mathcal{S})\leq 2g-3$. This contradicts $g\geq 3$. Hence the fibre $\phi_{L}^{-1}(E)$ is $\mathbb{P}^{1}$ for generic $E$.

This gives a $\mathbb{P}^{1}$-fibration:\[{\phi_{L}}_{|\mathcal{H}^{0}}:\mathcal{H}^{0}\rightarrow W^{0}\]
where $W^{0}=\{E\in W^{1}_{X}(2,L):h^{0}(E)=2, E ~\stable \}\subseteq W^{1}_{X}(2,L)$ and $\mathcal{H}^{0}:=\phi_{L}^{-1}(W_{0})\subseteq \mathcal{H}$ are open sets. 
\end{proof}

Consider the restriction $\mathcal{E}^{0}$ of the Poincar\'e bundle $\mathcal{E}$ on $X\times M_{X}(2,L)$ to $X\times W^{0}$. Let $p_{2}:X\times W^{0}\rightarrow W^{0}$ be the second projection. 
\begin{corollary}\label{z} 
Assume that $X$ and $L$ are generic. Then the following are true :

$(a)$ If $g\geq 2$, then unirationality of $\mathcal{H}$ implies unirationality of $W^{1}_{X}(2,L)$. 

$(b)$ If $g\geq 3$, then $\mathcal{H}$ is birational to the projective bundle 
    \[\mathbb{P}({p_{2}}_{*}\mathcal{E}^{0})\rightarrow W^{0}\]
\end{corollary}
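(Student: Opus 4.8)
The plan is to prove each statement by unwinding the $\mathbb{P}^1$-fibration structure established in Proposition~\ref{35}$(iii)$ and comparing it with the extension-theoretic description of $\mathcal{H}$. For part $(a)$, I would argue as follows. By Proposition~\ref{35}$(ii)$--$(iii)$, the rational map ${\phi_L}_{|\mathcal{H}}:\mathcal{H}\dashrightarrow W^1_X(2,L)$ is dominant, since the restriction to the open set $\mathcal{H}^0\to W^0$ is already a surjective $\mathbb{P}^1$-fibration onto an open dense subset $W^0$. Therefore if $\mathcal{H}$ is unirational, say via a dominant rational map $\mathbb{P}^N\dashrightarrow \mathcal{H}$, then composing with ${\phi_L}_{|\mathcal{H}}$ produces a dominant rational map $\mathbb{P}^N\dashrightarrow W^1_X(2,L)$, which is exactly the definition of unirationality of $W^1_X(2,L)$ in notation $(viii)$. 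The only point requiring care is that the composite of dominant rational maps is dominant, which holds because the image of a dominant rational map contains a dense open set. Note this direction needs only $g\geq 2$, as stated.

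For part $(b)$, the plan is to identify $\mathcal{H}$ birationally with the projective bundle $\mathbb{P}({p_2}_*\mathcal{E}^0)\to W^0$. The fibre of this projective bundle over a point $E\in W^0$ is $\mathbb{P}(H^0(X,E)^*)$, which is $\mathbb{P}^1$ precisely because $h^0(E)=2$ on $W^0$. The key geometric input is the morphism $e:\mathbb{P}(H^0(X,E)^*)\to \mathbb{P}_L$ from the proof of Proposition~\ref{35}$(iii)$, sending a section $s$ to its associated extension class $[0\to\mathcal{O}\xrightarrow{s}E\to L\to 0]$; its image lands inside $\mathcal{H}$ and avoids $\mathcal{S}$, and it is precisely the fibre $\phi_L^{-1}(E)$. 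First I would assemble these fibrewise maps into a single morphism $\mathbb{P}({p_2}_*\mathcal{E}^0)\to \mathcal{H}^0$ over $W^0$ using the universal property of the projectivized pushforward together with the Poincar\'e bundle $\mathcal{E}^0$. Concretely, the relative version of $e$ comes from the universal family: the pushforward ${p_2}_*\mathcal{E}^0$ is (on a possibly smaller open set where it is locally free of rank $2$) the sheaf whose fibre records $H^0(X,E)$, and a section of the tautological bundle over $\mathbb{P}({p_2}_*\mathcal{E}^0)$ yields the relative inclusion $\mathcal{O}\hookrightarrow \mathcal{E}^0$ defining the extension class.

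Then I would check that this morphism $\mathbb{P}({p_2}_*\mathcal{E}^0)\to \mathcal{H}^0$ is a birational equivalence. It is a morphism of $\mathbb{P}^1$-fibrations over $W^0$: over each $E\in W^0$ it restricts to the isomorphism $\mathbb{P}(H^0(X,E)^*)\xrightarrow{\sim}\phi_L^{-1}(E)$ given by $e$, which is an isomorphism because $e$ is an injective morphism from $\mathbb{P}^1$ to its image, the fibre $\phi_L^{-1}(E)=\mathbb{P}^1$. A fibrewise isomorphism over a common base that is itself a morphism is an isomorphism over $W^0$, hence in particular birational; and since both $\mathbb{P}({p_2}_*\mathcal{E}^0)$ and $\mathcal{H}$ are irreducible with $\mathcal{H}^0$ dense in $\mathcal{H}$, this yields the desired birational equivalence $\mathcal{H}\sim \mathbb{P}({p_2}_*\mathcal{E}^0)$.

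The main obstacle I anticipate is the construction and well-definedness of the global relative morphism $\mathbb{P}({p_2}_*\mathcal{E}^0)\to\mathcal{H}$, rather than the pointwise statement. One must ensure that ${p_2}_*\mathcal{E}^0$ is locally free of the expected rank $2$ after shrinking $W^0$ (which is legitimate since birationality is a statement about dense open sets), that the formation of this pushforward commutes with base change so that its fibres genuinely compute $H^0(X,E)$, and that the extension classes are assembled compatibly into a map to $\mathbb{P}_L$ landing in $\mathcal{H}$ rather than just defined fibre-by-fibre. This is exactly where Bertram's Lemma~$4.1$ of \cite{b}, invoked in Proposition~\ref{35}$(iii)$, does the work fibrewise; the task is to carry it out in families using the universal bundle $\widetilde{\mathscr{E}}_L$ from Theorem~\ref{5}$(iv)$. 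Once the map is constructed as a morphism of fibrations over $W^0$ restricting to $e$ on fibres, birationality is formal.
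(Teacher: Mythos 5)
Your proposal is correct and follows essentially the same route as the paper, which simply cites Proposition (\ref{35}) parts $(ii)$ and $(iii)$: part $(a)$ is the composition of a dominant rational map from $\mathbb{P}^{N}$ with the dominant map ${\phi_{L}}_{|\mathcal{H}}$, and part $(b)$ is the identification of the $\mathbb{P}^{1}$-fibration $\mathcal{H}^{0}\rightarrow W^{0}$ with $\mathbb{P}({p_{2}}_{*}\mathcal{E}^{0})$ via the fibrewise extension maps $e$. You have merely filled in the details (composability of dominant rational maps, base change for the pushforward, assembling $e$ in families) that the paper leaves implicit.
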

\begin{proof}
    $(a)$ and $(b)$ follow from Proposition $(\ref{35})$ parts $(ii)$ and $(iii)$ respectively.
    \end{proof}

Recall that a variety $V$ is called {\it stably-rational} if $V\times \mathbb{P}^{m}$ is rational for some non negative integer $m$.
 \begin{proposition}\label{notcone}
       Let $X$ be a smooth projective curve of genus $3$ and $L$ be a line bundle on $X$ of degree $5$. Assume that $X$ and $L$ are generic. Then $W^{1}_{X}(2,L)$ is stably-rational.
    \end{proposition}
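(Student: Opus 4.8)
The plan is to reduce the assertion to the rationality of the hypersurface $\mathcal{H}$, and then to exploit the fact that $\mathcal{H}$ is a singular cubic which is not a cone. First I would pin down the relevant dimensions. For $g=3$ and $\deg(L)=5$ one has $Ext^{1}(L,\mathcal{O})\cong H^{1}(X,L^{*})\cong H^{0}(X,K\otimes L)^{*}$, and since $\deg(K\otimes L)=4g-3=9>2g-2$, Riemann--Roch gives $h^{0}(K\otimes L)=7$. Hence $\mathbb{P}_{L}\cong\mathbb{P}^{6}$, and by Lemma \ref{3.3} the subscheme $\mathcal{H}$ is an integral cubic hypersurface in $\mathbb{P}^{6}$, i.e. a cubic $5$-fold.

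The main point is that $\mathcal{H}$ is rational. By Lemma \ref{3.3}, $\mathcal{H}$ is singular but is not a cone. Let $p$ be a singular point of $\mathcal{H}$. Being singular, the multiplicity of $\mathcal{H}$ at $p$ is at least $2$; on the other hand a point of multiplicity $3$ on a cubic hypersurface would force $\mathcal{H}$ to be a cone with vertex $p$, which is excluded. Thus $p$ is a double point, of multiplicity exactly $2$. I would then project from $p$. Choosing coordinates with $p=[1:0:\cdots:0]$, the defining cubic takes the form $x_{0}\,Q(x_{1},\ldots,x_{6})+C(x_{1},\ldots,x_{6})$ with $Q$ a nonzero quadric and $C$ a cubic, so a general line through $p$ meets $\mathcal{H}$ doubly at $p$ and in exactly one further point, and the projection $\mathcal{H}\dashrightarrow\mathbb{P}^{5}$ has rational inverse $x_{0}=-C/Q$. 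Therefore $\mathcal{H}$ is birational to $\mathbb{P}^{5}$, and in particular rational.

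Finally I would combine this with Corollary \ref{z}$(b)$, which identifies $\mathcal{H}$ birationally with the projective bundle $\mathbb{P}(p_{2*}\mathcal{E}^{0})\to W^{0}$. On $W^{0}$ the sheaf $p_{2*}\mathcal{E}^{0}$ is locally free of rank $2$ (cohomology and base change, since $h^{0}(E)=2$ is constant there), so $\mathbb{P}(p_{2*}\mathcal{E}^{0})$ is a Zariski-locally trivial $\mathbb{P}^{1}$-bundle and hence birational to $W^{0}\times\mathbb{P}^{1}$, thus to $W^{1}_{X}(2,L)\times\mathbb{P}^{1}$. Chaining the birational equivalences shows that $W^{1}_{X}(2,L)\times\mathbb{P}^{1}$ is birational to the rational variety $\mathcal{H}$, so it is rational; this is exactly stable-rationality of $W^{1}_{X}(2,L)$.

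The genuinely essential input is the classical observation that a cubic hypersurface with a double point is rational via projection from that point; the rest is bookkeeping with dimensions together with the no-name reduction of a projectivized rank-$2$ bundle to a product. The one thing to watch is that the projection is birational precisely because the multiplicity at $p$ equals $\deg\mathcal{H}-1=2$, which is where ``not a cone'' (ruling out multiplicity $3$) is used, and that $p_{2*}\mathcal{E}^{0}$ is genuinely locally free on $W^{0}$ so that the final no-name step applies.
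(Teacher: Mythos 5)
Your proof is correct and follows essentially the same route as the paper: rationality of the cubic $\mathcal{H}$ via projection from a singular point (using ``not a cone'' to guarantee the point has multiplicity exactly $2$, so a general line through it meets $\mathcal{H}$ in exactly one further point), combined with Corollary \ref{z}$(b)$ identifying $\mathcal{H}$ birationally with a Zariski-locally trivial $\mathbb{P}^{1}$-bundle over $W^{0}$. Your write-up merely makes a few steps more explicit (the dimension count, the form $x_{0}Q+C$ of the cubic, and the local freeness of $p_{2*}\mathcal{E}^{0}$) than the paper does.
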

    \begin{proof}
        In this case, by Lemma $(\ref{3.3})$, $\mathcal{H}$ is a singular cubic hypersurface which is not a cone and it is well known that $\mathcal{H}$ is rational in this case.  In fact if $p$ is a singular point on $\mathcal{H}$, then the variety $V_{\mathcal{H}}$ of lines through $p$ is a projective space of dimension $dim(\mathbb{P}_L)-1 = dim(\mathcal{H})$. A general line $l\in V_{\mathcal{H}}$ intersects $\mathcal{H}$ at exactly one point. This corresponds to a birational map $V_{\mathcal{H}} \rightarrow \mathcal{H}$. This proves rationality of $\mathcal{H}$. By Corollary $(\ref{z})(b)$, $\mathcal{H}$ is birational to the projective bundle  
        \[\mathbb{P}({p_{2}}_{*}\mathcal{E}^{0})\rightarrow W^{0}.\]
This is a Zariski locally trivial fibration. Hence rationality of $\mathcal{H}$ implies stably-rationality of $W^{1}_{X}(2,L)$.       
    \end{proof}
    
    \subsection{Genus four case}
    
 Our next goal is to show that the hypersurface $\mathcal{H}$ is unirational when $g=4$.  We start by recalling a few well known facts about Grassmannians of projective linear subspaces of a projective space and incidence correspondences. See \cite[\S 2 and Theorem 3.3]{AK}, for details and proofs.

Let $r, d, n$ be integers with $d,n\geq 1$ and $1\leq r\leq n-1$.  Let $V:=H^{0}(\mathbb{P}^{n},\mathcal{O}(1))$. Clearly $\mathbb{P}^{n}\cong \mathbb{P}(V)$ and let $\mathbb{P}$ denote the scheme $\mathbb{P}(S^{d}(V^{*}))$. Then $\mathbb{P}$ parametrises all the degree $d$ hypersurfaces of $\mathbb{P}^{n}$. 

Let $\mathbb{G}(r,n)$ denote the Grassmann variety parametrising the $r$-dimensional projective linear subspaces of $\mathbb{P}^{n}$. Consider the incidence variety
 \[I(r,n,d):=\{(X,\Lambda)\in \mathbb{P}\times \mathbb{G}(r,n) : \Lambda\subseteq X\}.\] 

Let $p_{1}$ and $p_{2}$ denote the natural projections of $I(r,n,d)$ to $\mathbb{P}$ and $\mathbb{G}(r,n)$ respectively. The inclusion $I(r,n,d)\hookrightarrow \mathbb{P}\times \mathbb{G}(r,n)$ makes $I(r,n,d)\overset{p_{2}}\longrightarrow \mathbb{G}(r,n)$ into a projective subbundle of the trivial projective bundle $\mathbb{P}\times \mathbb{G}(r,n)\rightarrow \mathbb{G}(r,n)$ over $\mathbb{G}(r,n)$ of codimension $\binom {r+d}{d}$. 

In particular $I(r,n,d)$ is non empty and has dimension $(r+1)(n-r)+\binom{n+d}{d}-\binom{r+d}{d}-1$.  However if $d\geq 2$, then $p_{1}$ is surjective if and only if the following two conditions hold:

$(i)$ If $d\geq 3$, then $(r+1)(n-r)\geq \binom{r+d}{d}$

$(ii)$ If $d=2$, then $n\geq 2r+1$.

On the other hand, a conjecture of A. Conte states that, any degree $d$ projective hypersurface which contains a projective linear subspace of dimension $d-2$ is unirational. The conjecture is true for $d=4,5$, see \cite[pg 312]{M}. We obtain the following proposition.

\begin{proposition}\label{12}
Let $X$ be a smooth projective curve of genus $g\geq 2$ and $L$ be a line bundle on $X$ of degree $2g-1$. When the genus $g=4$, the quartic hypersurface $\mathcal{H}$ is unirational.
\end{proposition}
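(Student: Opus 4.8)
The plan is to reduce the statement to the Conte-type criterion recalled just above: a quartic hypersurface is unirational as soon as it contains a linear subspace of dimension $d-2=2$, and this case $d=4$ is among the known cases of the conjecture. So the whole proof amounts to producing a $\mathbb{P}^{2}$ inside $\mathcal{H}$, which I will extract from the surjectivity criterion for the incidence variety $I(r,n,d)$. First I would pin down the ambient numerology for $g=4$. Since $\deg L=2g-1=7$, Riemann--Roch gives $h^{1}(L^{*})=10$, so $\mathbb{P}_{L}=\mathbb{P}(\mathrm{Ext}^{1}(L,\mathcal{O})^{*})\cong \mathbb{P}^{9}$; thus $n=9$. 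By Lemma~\ref{3.3} the hypersurface $\mathcal{H}$ has degree $g=4$, so it is a point of the parameter space $\mathbb{P}=\mathbb{P}(S^{4}(V^{*}))$ of quartics in $\mathbb{P}^{9}$, and we are in the regime $d=4\geq 3$.

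Next I would apply the incidence-variety construction with $r=2$, $n=9$, $d=4$, so that $\mathbb{G}(r,n)=\mathbb{G}(2,9)$ parametrises the $2$-planes of $\mathbb{P}^{9}$ and $p_{1}:I(2,9,4)\to \mathbb{P}$ records which quartics contain such a plane. The key computation is to check condition $(i)$ for surjectivity of $p_{1}$, namely $(r+1)(n-r)\geq \binom{r+d}{d}$. Here
\[
(r+1)(n-r)=3\cdot 7=21,\qquad \binom{r+d}{d}=\binom{6}{4}=15,
\]
so $21\geq 15$ and the criterion is satisfied. Consequently $p_{1}$ is surjective, which by definition means that \emph{every} quartic hypersurface in $\mathbb{P}^{9}$ contains a $2$-plane. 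In particular the specific quartic $\mathcal{H}$, being a point of $\mathbb{P}$, contains a linear $\mathbb{P}^{2}\subseteq \mathcal{H}$.

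Finally, since $d-2=2$, the plane $\mathbb{P}^{2}\subseteq \mathcal{H}$ realises exactly the hypothesis of Conte's conjecture for $\mathcal{H}$, and as the conjecture is known for $d=4$, I would conclude that $\mathcal{H}$ is unirational. I do not expect a genuine obstacle here: the argument is essentially the verification of a single inequality feeding into two black boxes (the surjectivity statement for $I(r,n,d)$ and the $d=4$ case of Conte's conjecture). The one point requiring slight care is the logical strength of the input: I must use \emph{surjectivity} of $p_{1}$, not mere dominance, because I need the $2$-plane to sit inside the one distinguished quartic $\mathcal{H}$ rather than inside a generic quartic; surjectivity delivers exactly this, so no genericity of $\mathcal{H}$ within $\mathbb{P}$ is needed beyond what is already assumed on $X$ and $L$.
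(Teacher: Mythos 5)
Your proposal is correct and follows essentially the same route as the paper: verify the surjectivity criterion $(r+1)(n-r)=21\geq 15=\binom{6}{4}$ for $I(2,9,4)\to\mathbb{P}$ to get a $\mathbb{P}^{2}\subseteq\mathcal{H}$, then invoke the known $d=4$ case of Conte's conjecture from \cite{M}. Your explicit remark that surjectivity (rather than dominance) of $p_{1}$ is what places the plane inside the specific hypersurface $\mathcal{H}$ is a correct and worthwhile clarification of the same argument.
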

\begin{proof}
    Note that for fixed natural numbers $r,n,d$ with $1\leq r\leq n-1$ surjectivity of $p_{1}:I(r,n,d)\rightarrow \mathbb{P}$ means that any degree $d$ hypersurface in $\mathbb{P}^{n}$ contains a $r$- dimensional projective linear subspace of $\mathbb{P}^{n}$. Recall that $\mathcal{H}$ is a degree $g$ hypersurface in $\mathbb{P}^{3g-3}$. On choosing $(r,n,d)$ to be $(g-2,3g-3,g)$, and putting $g=4$ we see that $(r+1)(n-r)-\binom{r+d}{d}= 3(12-5)-\binom{6}{4}=21-15\geq 0$. This shows that any degree $g$ hypersurface in $\mathbb{P}^{3g-3}$ contains a $\mathbb{P}^{g-2}$, when $g=4$. Now applying \cite[Pg.312]{M} to the case $d=g=4$, we get that the hypersurface $\mathcal{H}$ is unirational.
\end{proof}

\begin{remark}However when $g=5$ similar computation as above shows that the map $p_{1}:I(r,n,d)\rightarrow \mathbb{P}$ is not surjective where $(r,n,d)=(g-2,3g-3,g)=(3,12,5)$. Therefore we do not know about existence of a $\mathbb{P}^{3}\subset \mathcal{H} \subset \mathbb{P}^{12}$. However in the next section we will discuss rational chain connectedness of $\mathcal{H}$, when $g\geq 5$ 
\end{remark}

We summarise above results as follows.

\begin{theorem}\label{main}
    Let $X$ be a smooth projective curve of genus $g\geq 2$ and $L$ be a fixed line bundle on $X$ of degree $2g-1$. Assume that $X$ and $L$ are generic. Then the following are true:

    $(i)$ If $g=3$, then $W^{1}_{X}(2,L)$ is stably-rational.

    $(ii)$ If $g=4$, then $W^{1}_{X}(2,L)$ is unirational. 
\end{theorem}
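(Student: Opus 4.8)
The plan is to assemble the two cases from the results already established, so that no new geometric input is required beyond what precedes this statement; the theorem is essentially a consolidation.

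For part $(i)$, when $g=3$ the hypersurface $\mathcal{H}$ is a singular cubic hypersurface in $\mathbb{P}_{L}=\mathbb{P}^{6}$ which is not a cone (Lemma \ref{3.3}), so projection from a singular point exhibits it as rational. I would simply invoke Proposition \ref{notcone}, which records precisely this: rationality of $\mathcal{H}$, combined with the fact from Corollary \ref{z}$(b)$ that $\mathcal{H}$ is birational to a Zariski-locally-trivial $\mathbb{P}^{1}$-bundle over $W^{1}_{X}(2,L)$, forces $W^{1}_{X}(2,L)\times \mathbb{P}^{1}$ to be rational, i.e. $W^{1}_{X}(2,L)$ is stably-rational.

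For part $(ii)$, the strategy is to establish unirationality upstairs on $\mathcal{H}$ and then transport it downstairs. Concretely, I would quote Proposition \ref{12}: for $g=4$ the incidence-variety dimension count with $(r,n,d)=(g-2,3g-3,g)=(2,9,4)$ guarantees that the quartic $\mathcal{H}\subset \mathbb{P}^{9}$ contains a linear subspace $\mathbb{P}^{g-2}=\mathbb{P}^{2}$ of dimension $d-2=2$, whence Conte's conjecture (which is known for $d=4$) makes $\mathcal{H}$ unirational. I would then apply Corollary \ref{z}$(a)$, whose content rests on the dominance of ${\phi_{L}}_{|\mathcal{H}}:\mathcal{H}\dashrightarrow W^{1}_{X}(2,L)$ from Proposition \ref{35}$(ii)$: composing a dominant rational map $\mathbb{P}^{N}\dashrightarrow \mathcal{H}$ with the dominant $\phi_{L}$ produces a dominant rational map $\mathbb{P}^{N}\dashrightarrow W^{1}_{X}(2,L)$, yielding the desired unirationality.

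Since every ingredient is already in place, there is no genuine geometric obstacle to overcome at this stage; the only points requiring care are bookkeeping. The one step I would state explicitly in $(ii)$ is that the transfer of unirationality through $\phi_{L}$ is legitimate, which is immediate because the composition of dominant rational maps is again dominant. Thus the substantive work lives entirely in Propositions \ref{notcone} and \ref{12} together with Corollary \ref{z}, and the present theorem merely records their combination.
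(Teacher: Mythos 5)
Your proposal is correct and coincides with the paper's own proof, which likewise deduces $(i)$ directly from Proposition (\ref{notcone}) and $(ii)$ from Proposition (\ref{12}) combined with Corollary (\ref{z})$(a)$. The extra details you supply (projection from a singular point of the cubic, the incidence count with $(r,n,d)=(2,9,4)$, and the composition of dominant rational maps) are exactly the ingredients the cited results rely on.
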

\begin{proof}
   $(i)$ is Proposition $(\ref{notcone})$ and $(ii)$ follows from Proposition $(\ref{12})$ and Corollary $(\ref{z})(a)$. 
\end{proof} 

\section{Rational chain connectedness, when  $g\geq 5$}

Assume $g\geq 2$.
 Recall that a variety $V$ is rationally connected if given any two points $x,y \in V$, there is an irreducible  rational curve $C$ on $V$ joining $x$ and $y$. Rationally chain connected will mean that $C$ need not be irreducible and is a finite chain of connected rational curves on $V$. If $V$ is smooth then these two notions coincide. See \cite{Kollar} for details.

Recall that rational and unirational varieties are rationally connected. In this section we will study rational chain connectedness of $W^1_X(2,L)$ when $g\geq 5$, and also determine the minimal length of  chains of rational curves required to connect any two points. 

 First we show the following.

\begin{theorem} Assume $X$ and $L$ are as in above theorem.
The Brill Noether subvariety $W^1_X(2, L)$ is rationally chain connected.
\end{theorem}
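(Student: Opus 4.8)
The plan is to reduce rational chain connectedness of $W^1_X(2,L)$ to that of the hypersurface $\mathcal{H}$, and then establish the latter directly using the geometry of $\mathbb{P}_L$ and the secant varieties. By Proposition $(\ref{35})(ii)$, the map ${\phi_L}_{|\mathcal{H}}:\mathcal{H}\dashrightarrow W^1_X(2,L)$ is dominant, so it suffices to show $\mathcal{H}$ is rationally chain connected: a dominant rational map sends a connecting chain of rational curves on $\mathcal{H}$ (through general points lying over the given target points) to a chain of rational (or possibly contracted, hence trivial) curves on $W^1_X(2,L)$. First I would set up this reduction carefully, noting that since $W^1_X(2,L)$ is irreducible (again by Proposition $(\ref{35})(ii)$) it is enough to connect two \emph{general} points, and general points of $W^1_X(2,L)$ lift to general points of $\mathcal{H}$ in the $\mathbb{P}^1$-fibration of Proposition $(\ref{35})(iii)$.

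Next I would prove that $\mathcal{H}$ itself is rationally chain connected. Here the natural tool is the \emph{Hecke curve} structure promised in the statement of Theorem $1.1(iii)$, which geometrically corresponds to lines in $\mathbb{P}_L$ lying on $\mathcal{H}$. Since $\mathcal{H}$ is a degree $g$ hypersurface in $\mathbb{P}_L \cong \mathbb{P}^{3g-3}$ (Lemma $(\ref{3.3})$), I would analyze the family of lines contained in $\mathcal{H}$ via the incidence/Fano scheme of lines. The key numerical input is that $\mathcal{H}$ is a \emph{low-degree} hypersurface relative to its dimension: the dimension $3g-4$ grows linearly in $g$ while the degree is $g$, so for $g\geq 5$ one is far inside the Fano range where a general hypersurface (and, I would need to argue, our specific $\mathcal{H}$) is covered by lines and in fact rationally chain connected by them. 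Concretely, I would use the fact that through a general point of $\mathcal{H}$ there passes a positive-dimensional family of lines on $\mathcal{H}$, and that the variety of lines is itself connected, to build chains connecting any two general points.

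The step I expect to be the main obstacle is establishing the chain-connectivity of $\mathcal{H}$ rigorously rather than merely counting dimensions. The clean statement is that a smooth Fano hypersurface is rationally connected (Kollár–Miyaoka–Mori, and for lines the classical results on the Fano scheme of lines), but $\mathcal{H}$ is \emph{singular} by Lemma $(\ref{3.3})$, so I cannot quote smooth-case theorems directly. My plan to circumvent this is twofold: either (a) show that $\mathcal{H}$, being genericity-controlled, has singularities in sufficiently high codimension that a general point and a general line through it avoid the singular locus — so that the lines-through-a-point argument of the smooth case still runs on the smooth locus — or (b) exploit the explicit resolution $\sigma:\widetilde{\mathbb{P}}_L\to\mathbb{P}_L$ and the structure of $\mathcal{H}$ as a determinantal hypersurface $\{\det(\lambda)=0\}$ coming from the $g\times g$ matrix of linear forms in $(\ref{b})$. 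The determinantal description is attractive because it exhibits $\mathcal{H}$ as swept out by linear spaces (the loci where $\lambda$ drops rank contain large projective subspaces), giving an honest supply of rational curves. I would argue that these linear sections, together with the dominance onto $W^1_X(2,L)$, suffice to connect any two general points by a finite chain; controlling the minimal length of such chains — the refinement mentioned after the theorem — would then come from bounding how many lines/linear pieces are needed, which I expect to reduce to a connectedness statement for the Fano scheme of lines on $\mathcal{H}$.
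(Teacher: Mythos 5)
Your reduction of the problem to rational chain connectedness of $\mathcal{H}$ is essentially the paper's first move, though the paper is more careful about the indeterminacy of $\phi_L|_{\mathcal{H}}$: instead of pushing chains forward along a rational map, it passes to the strict transform of $\mathcal{H}$ in Bertram's resolution $\widetilde{\mathbb{P}}_L$ and then to a desingularization $\widetilde{\mathcal{H}}'$, so that the map to $W^1_X(2,L)$ becomes a morphism and images of connected chains are honest connected chains. Your version can be repaired (a rational curve not contained in $\mathcal{S}=\Sec^{g-1}(X)$ still maps to a rational curve or a point, and $\dim(\mathcal{S})\leq 2g-3<\dim(\mathcal{H})=3g-4$), but you would need to argue that no component of the connecting chain, and no node joining two components, is forced into $\mathcal{S}$; as written you do not address the nodes at all, and if a node lies in the indeterminacy locus the images of the two adjacent components need not meet.

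The genuine gap is in the step you yourself flag: you never prove that the singular hypersurface $\mathcal{H}$ is rationally chain connected; you only propose two unexecuted strategies for dealing with its singularities. The paper closes this step in a way that makes the singularity issue evaporate: a smooth hypersurface of degree $g$ in $\mathbb{P}^{3g-3}$ is Fano, hence rationally connected by \cite{MM}, hence connected by chains of rational curves of some bounded degree $e$; since the space of such chains is proper over the parameter space of all degree-$g$ hypersurfaces in $\mathbb{P}_L$, the property of being connected by degree-$e$ chains specializes from the general (smooth) member to the singular member $\mathcal{H}$. No analysis of the singular locus of $\mathcal{H}$, of its determinantal structure, or of its Fano scheme of lines is required. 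Your lines-based alternative is in fact viable and is exactly what the paper does as a refinement in \S 4.1: Lemmas $(\ref{4.33})$ and $(\ref{4.44})$ and Lemma $(\ref{chain})$ use polar hypersurfaces to show that \emph{any} irreducible hypersurface of degree $d$ in $\mathbb{P}^n$ with $n\geq 2d$ is connected by chains of two lines, with no smoothness or genericity hypothesis — so your anticipated obstacle (a) is not actually an obstacle. But your proposal stops precisely where that argument would have to begin, so it does not yet constitute a proof of the key step.
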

\begin{proof}
 A smooth hypersurface of degree $g$ in $\mathbb{P}^{3g-3}$ has its canonical bundle anti-ample. By \cite{MM}, it is rationally connected. It is covered by rational curves, say of degree $e$. The universal family of chains of rational curves of degree $e$ for the universal hypersurface of degree $g$ in $\mathbb{P}_L$ provides rational chain connectedness for $\mathcal{H}$.

Now consider the rational map
\begin{equation}\label{W}
\phi_L|_{\mathcal{H}}:\mathcal{H} \dashrightarrow W^1_X(2,L).
\end{equation}
Using Bertram's resolution as in \S 3,
$$
\Phi_L: \widetilde{\mathbb{P}}_L \rightarrow M_X(2,L)
$$
we consider the strict transform of $\mathcal{H}$ in this resolution and obtain the resulting morphism:
$$
\widetilde{\mathcal{H}} \rightarrow W^1_X(2,L).
$$
We further resolve the singularities of $\widetilde{\mathcal{H}}$ to obtain a birational morphism:
$$
{\widetilde{\mathcal{H}}}^{'} \rightarrow \mathcal{H}.
$$
Thus there is a morphism
$$
{\widetilde{\mathcal{H}}}^{'} \rightarrow W^1_X(2,L)
$$
which extends the rational map in $\eqref{W}$.

Now  $\mathcal{H}$ is rationally chain connected implies ${\widetilde{\mathcal{H}}}^{'}$ is rationally connected.
Note that for smooth varieties, rationally chain connected is equivalent to rationally connected.

This now implies that $W^1_X(2,L)$ is rationally chain connected.
\end{proof}

\begin{remark}
In fact we will improve the above result in the next subsection. We determine the minimal degree rational curves which cover $\mathcal{H}$ $($and hence also $W^{1}_{X}(2,L)$$)$  and the minimal length of the chain of lines on the hypersurface 
$\mathcal{H}$ required to connect any two points.
\end{remark}

\subsection{Minimal length of the chain of lines on $\mathcal{H}$}

Let $X\subset \mathbb{P}^n$ be an irreducible hypersurface of degree $d< n$.
Let $\mathcal{T}(X)$ denote the tangent sheaf of $X$ and $\mathbb{P} (\mathcal{T}(X))$ denote the 
projectivised tangent sheaf. Let $\mathcal{V}(X)\subset \mathbb{P} (\mathcal{T}(X))$ be the subscheme 
parametrising
pairs $(x,l)$ where $l$ is at least a $d$-fold tangent at $x$. Choose local 
coordinates $(x_1,...,x_n)$ around $x$ and the equation of $X$ can be written
as $f=f_1+f_2+...+f_d$ where $f_j$ are homogeneous terms of degree $j$.
The fiber of $\mathcal{V}(X)\longrightarrow X$ is given by $f_1=...=f_d=0$ 
\cite[p.283]{Kollar}. If $S\subset X$, then let $\mathcal{V}(S)$ denote the
inverse image of $S$ under the projection $\mathcal{V}(X)\longrightarrow X$.

Recall from \S 3.1, that $\mathbb{G}(1,n)$ is the Grassmanian parametrising one dimensional projective linear subspaces of $\mathbb{P}^{n}$. Suppose $S\subset X$ is an irreducible subvariety.
 Consider the morphism 
 $$
 \mathcal{V}(S)\longrightarrow \mathbb{G}(1,n), (s,l)\mapsto
[l].$$ 
Denote the pullback of the universal family by $\mathcal{L}(S)$. 
Then $\mathcal{L}(S)\subset \mathcal{V}(S)\times \mathbb{P}^n$. 
   
\begin{lemma}\label{4.33}
Let $X\subset \mathbb{P}^n$ be any hypersurface of degree $d$.
The family of lines $\mathcal{L}(S)$ surjects onto $X$ if $\mbox{dim}~S\geq d$.
\end{lemma}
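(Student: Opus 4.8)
The plan is to reduce the statement to a single count of conditions on the base point, carried out fibrewise over a general point of $X$. First note that every line in the family $\mathcal{L}(S)$ actually lies in $X$: if $l$ is at least a $d$-fold tangent line to the degree-$d$ hypersurface $X$ at a point $s$, then the restriction of the defining equation $f$ to $l$ is a polynomial of degree $\leq d$ in one variable vanishing to order $\geq d$, and the local description $f_1=\cdots=f_d=0$ of the fibre of $\mathcal{V}(X)\to X$ forces $f|_l\equiv 0$. Hence there is a well-defined morphism $q:\mathcal{L}(S)\to X$ sending a flag $(s,l,y)$ with $y\in l$ to $y$, and the assertion ``$\mathcal{L}(S)$ surjects onto $X$'' is exactly the surjectivity of $q$. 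Since $S$ is a closed irreducible subvariety of the projective $X$, the spaces $\mathcal{V}(S)$ and $\mathcal{L}(S)$ are projective over $S$ (being closed subschemes of $\mathbb{P}(\mathcal{T}(X))|_S$ and of its pullback to $\mathbb{G}(1,n)$), so $q$ is proper and its image is closed. It therefore suffices to prove that $q$ is dominant.

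To prove dominance I would fix a general point $p\in X$ and ask which $s\in S$ produce a family line through $p$, i.e.\ for which $s$ the line $\overline{ps}$ is contained in $X$. Parametrising $\overline{ps}$ as $s+t(p-s)$ and expanding $f$ at $s$ as $f=f_1+\cdots+f_d$ in the local coordinates of the setup, the line lies in $X$ precisely when all the coefficients in $t$ vanish; the constant coefficient is $f(s)=0$ automatically because $s\in S\subseteq X$, so the remaining conditions are the $d$ equations $f_1(p-s)=\cdots=f_d(p-s)=0$, viewed now as equations in the variable $s$. As $s$ ranges over $S$ these define $d$ divisors $D_1,\dots,D_d$ on $S$, each of which is the restriction to $S$ of the zero locus of a form on $\mathbb{P}^n$ of positive degree, hence a section of an ample line bundle on $S$ (or all of $S$, if the form restricts to zero).

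The key step is then the nonemptiness $S\cap D_1\cap\cdots\cap D_d\neq\emptyset$. Because $D_1,\dots,D_d$ are ample divisor sections and $\mbox{dim}~S\geq d$, the projective dimension theorem gives $\mbox{dim}(S\cap D_1\cap\cdots\cap D_d)\geq \mbox{dim}~S-d\geq 0$, so the intersection is nonempty; any $s$ in it yields a line $\overline{ps}\subseteq X$ through $p$ based at $s\in S$, whence $p\in q(\mathcal{L}(S))$. For general $p$ one has $p\notin S$ (as $S\subsetneq X$ in the range of interest, e.g.\ $\mbox{dim}~\mathcal{H}=3g-4>g$ for $g\geq 3$), so the chosen $s$ is distinct from $p$ and $\overline{ps}$ is an honest line. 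Since a general point of the irreducible $X$ lies in the image, $q$ is dominant, and combined with the closedness of the image it is surjective. I expect the main obstacle to be exactly the nonemptiness claim: one must check that the $d$ tangency equations genuinely impose at most $d$ conditions and cut out a nonempty locus on the possibly singular projective variety $S$, which is where the hypothesis $\mbox{dim}~S\geq d$ is used sharply and where the ampleness of the divisor sections (rather than a naive dimension subtraction) is needed to rule out the intersection becoming empty.
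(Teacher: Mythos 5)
Your argument is the same in substance as the paper's: fix a general $p\in X$, observe that the condition ``the line $\overline{ps}$ lies in $X$'' is cut out on $S$ by the polar (tangency) equations of $X$ at $p$, and use $\dim S\geq d$ to force this locus to be non-empty. The one place where you diverge, and where a genuine (though easily repaired) gap appears, is the count of conditions and the resulting need to rule out $s=p$. Writing $F(sz+tp)=\sum_{j=0}^{d}s^{d-j}t^{j}P_j(z,p)$, the extreme coefficients $P_0=F(z)$ and $P_d=F(p)$ vanish automatically because $z\in S\subseteq X$ and $p\in X$; so only the $d-1$ polars $P_1,\dots,P_{d-1}$ impose conditions on $s$, and the paper intersects $S$ with these $d-1$ hypersurfaces to get a locus $Z$ of dimension $\geq \dim S-(d-1)\geq 1$, from which a point $z\neq p$ can always be chosen. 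You instead impose all $d$ conditions, obtain only $\dim\geq \dim S-d\geq 0$, and then exclude $s=p$ by claiming a general $p$ satisfies $p\notin S$. That claim requires $S\subsetneq X$, which is not part of the hypotheses: the lemma allows $S=X$ whenever $\dim X\geq d$, and this case actually arises in Lemma (\ref{chain}) when $n=2d$ (there one takes $\dim S=2d-1=\dim X$, so $S=X$). Since $p$ does lie on all its own polars, a zero-dimensional intersection could in principle be exactly $\{p\}$, and your argument would then produce no line. The repair is already implicit in your own parenthetical: the $d$-th condition is the constant $F(p)=0$ and hence cuts out all of $S$, so the effective intersection is with $d-1$ positive-degree divisors, has dimension $\geq 1$, and contains a point distinct from $p$ --- which is precisely the paper's argument. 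Your preliminary observations (that every line of $\mathcal{L}(S)$ lies in $X$ and that properness reduces surjectivity to dominance) are correct and harmless additions.
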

\begin{proof} Let $p=(p_i)\in X$ be a general point.
Consider the polar hypersurfaces of $X$ with respect to $p$:
\begin{eqnarray*}
P_1(x,p) & : &\sum_{i=1}^n\frac{\partial{F}}{dx_i}(x).p_i=0 \\
P_2(x,p) & : & \sum\frac{\partial{F}}{\partial{x_i}\partial{x_j}}(x).p_ip_j=0 \\
&& \vdots \\
P_{d-1}(x,p)& : &\sum \frac{\partial{F}}{\partial{x_{i_1}}\partial{x_{i_2}}...\partial{x_{i_{(d-1)}}}}(x).p_{i_1}p_{i_2}...p_{i_{(d-1)}}=0.
\end{eqnarray*}

Then clearly the variety $\mathbb{P}(p)$ defined by the equations 
$P_1=P_2=...=P_{d-1}=0$ 
has a non-empty intersection $Z$ with $S$. This means that for $z\in Z$ the 
line 
$l_{zp}$ joining $p$ and $z$ has intersection $l_{zp}.X=d-1$ if $l_{zp}\not\subset X$. 
 In particular since dimension of any irreducible component of $Z$ is at least $1$ there is a $z\in Z$ and $z\neq p$
such that $p\in \mathcal{V}(z)$, i.e., $l_{zp}\subset X$. 
\end{proof}

Let $\dim$ $S\geq d$. Consider the fibre product $\mathcal{V}^{(2)}(S)=\mathcal{V}(S)\times_S\mathcal{V}(S)$ and the family
$\mathcal{L}^{(2)}(S)\longrightarrow \mathcal{V}^{(2)}(S)$ of connected 
chains of $2$ lines. Consider the cycle map $u:\mathcal{L}^{(2)}(S)\times_{\mathcal{V}^{(2)}(S)}\mathcal{L}^{(2)}(S)\longrightarrow X\times X$.
\begin{lemma}\label{4.44}
The cycle map $u$ surjects onto $ X \times X$ if $\mbox{dim}~S= 2d-1$.
\end{lemma}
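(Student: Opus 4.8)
The plan is to build on Lemma~\ref{4.33} and promote the single-line covering statement to a statement about chains of two lines covering pairs of points. Recall that Lemma~\ref{4.33} shows that when $\dim S\geq d$, through a general point $p\in X$ there passes a line $l\subset X$ meeting $S$. I would reformulate this as a dominance statement: the map $\mathcal{L}(S)\longrightarrow X$ that sends a pair $(s,l)$ with $l\subset X$ to the general point it covers is dominant when $\dim S\geq d$. The key idea for chains of two is that a chain $l_1\cup l_2$ joining two general points $p,q\in X$ can be constructed by first finding a line $l_1\subset X$ through $p$ meeting $S$ at a point $z_1$, then a line $l_2\subset X$ through $q$ meeting $S$ at a point $z_2$, and finally connecting $z_1$ to $z_2$ inside $S$. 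For this gluing to work one needs $S$ itself to be covered by the lines meeting it, i.e. that the two ``landing points'' on $S$ can be joined, which is precisely where the larger dimension hypothesis $\dim S=2d-1$ enters.

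First I would set up the fibre product carefully. The family $\mathcal{V}^{(2)}(S)=\mathcal{V}(S)\times_S\mathcal{V}(S)$ parametrises pairs of tangent data sharing a common point of $S$, and $\mathcal{L}^{(2)}(S)$ is the associated family of connected chains of two lines meeting at that common point of $S$. Then the double fibre product $\mathcal{L}^{(2)}(S)\times_{\mathcal{V}^{(2)}(S)}\mathcal{L}^{(2)}(S)$ carries the cycle map $u$ recording the two free endpoints of the two-line chain, landing in $X\times X$. I would compute dimensions: by Lemma~\ref{4.33}, each $\mathcal{L}(S)\to X$ is dominant with generic fibre of dimension $\dim\mathcal{L}(S)-\dim X$, and I would track how these fibre dimensions add up over the fibre product so that the total source dimension is at least $2\dim X=2(n-1)$. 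The hypothesis $\dim S=2d-1$ should be exactly what makes the arithmetic balance, giving enough parameters for the two endpoints to move independently over all of $X\times X$.

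The main technical point, and the step I expect to be the genuine obstacle, is verifying that $u$ is dominant rather than merely having a source of the correct dimension. Dimension counting alone gives the domain enough room, but surjectivity onto $X\times X$ requires that the two endpoints $p$ and $q$ vary \emph{independently}. The essential input is that for a general pair $(p,q)$, the lines produced by Lemma~\ref{4.33} through $p$ and through $q$ can be chosen to meet $S$ at points $z_1,z_2$ that themselves lie on a common line in $X$ meeting $S$ — or more precisely that the landing locus in $S$ is large enough to guarantee a connecting second line exists. I would argue this by applying Lemma~\ref{4.33} a second time with a subvariety of $S$ of dimension $\geq d$ playing the role of the ``$S$'' there: since $\dim S=2d-1\geq d$, the locus of points of $S$ reachable by lines from a general point is itself at least $d$-dimensional, so it again meets the analogous polar-determined variety, allowing the chain to close up.

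To make this rigorous I would fix a general $(p,q)\in X\times X$, produce $l_1\ni p$ meeting $S$ at $z_1$ via Lemma~\ref{4.33}, and then consider the subvariety $S'\subset S$ of possible landing points as $l_1$ varies; showing $\dim S'\geq d$ reduces the connecting problem to a second application of the polar-hypersurface intersection argument of Lemma~\ref{4.33}, now joining $z_1$ to a point over $q$ through $S$. The inequality $2d-1\geq d+ (d-1)$ is what leaves a $(d-1)$-dimensional margin to absorb the constraints imposed by fixing both endpoints, and I expect the careful bookkeeping of these incidence dimensions — ensuring the intersection $Z$ in each application of the polar argument stays nonempty of positive dimension — to be the crux of the proof.
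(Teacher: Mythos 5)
Your final paragraph does arrive at the paper's argument, but the route you take to get there contains a conceptual slip that you should excise, because as written the first half of your plan does not produce elements of the domain of $u$. The family $\mathcal{L}^{(2)}(S)\times_{\mathcal{V}^{(2)}(S)}\mathcal{L}^{(2)}(S)$ sits over $\mathcal{V}^{(2)}(S)=\mathcal{V}(S)\times_S\mathcal{V}(S)$, so a chain in this family consists of two lines passing through a \emph{single common point} $z\in S$. Your plan of finding $l_1\ni p$ landing at $z_1\in S$, finding $l_2\ni q$ landing at $z_2\in S$, and then ``connecting $z_1$ to $z_2$ inside $S$'' (or asking that $z_1,z_2$ ``lie on a common line in $X$'') either yields a disconnected pair of lines or a chain of \emph{three} lines; neither proves the lemma. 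Likewise the fibre-product dimension count in your second paragraph is a red herring: as you yourself note, it cannot give surjectivity, and the paper never uses it.

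The correct mechanism, which is what the hypothesis $\dim S=2d-1$ is for, is a \emph{simultaneous} intersection: impose on $S$ all $2(d-1)$ polar conditions $P_1(x,p)=\dots=P_{d-1}(x,p)=0$ and $P_1(x,q)=\dots=P_{d-1}(x,q)=0$ at once. Since $\dim S=2d-1$ and there are $2d-2$ conditions, the resulting $Z\subseteq S$ is nonempty with every component of dimension at least $1$, so one may choose $z\in Z$ with $z\neq p$ and $z\neq q$; then both $l_{zp}$ and $l_{zq}$ lie in $X$ and meet at $z\in S$, which is exactly a point of the domain of $u$ mapping to $(p,q)$. Your two-step version (cut $S$ down to $S'=S\cap\mathbb{P}(p)$ of dimension $\geq d$, then apply the polar argument for $q$ to $S'$) is logically equivalent, \emph{provided} you do not fix the first landing point $z_1$ before running the second step --- the point $z$ produced by the second application replaces $z_1$ and serves as the common vertex for both lines. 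If you rewrite the proof around this single simultaneous (or nested, but unfixed) intersection and delete the ``connect $z_1$ to $z_2$'' scaffolding, you have the paper's proof.
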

\begin{proof} Let $p,q\in X$ be any two distinct points.
As in Lemma $(\ref{4.33})$, consider the polar hypersurfaces of $X$ with respect to $p$ and 
$q$:
$$P_1(x,p)=P_2(x,p)=...=P_{d-1}(x,p)=0,$$
$$P_1(x,q)=P_2(x,q)=...=P_{d-1}(x,q)=0.$$

The complete intersection defined by the polar hypersurfaces with respect to $p$ 
and $q$ intersects $S$ nontrivially since $\mbox{dim}~S=2d-1$. Call the
intersection $Z$. Now for $z\in Z$, $P_j(z,p)=0$ and 
$P_j(z,q)=0$ for $j\leq d-1$ imply that 
the points $p$ and $q$ lie on the complete intersection 
$\mathcal{V}_{d}(z)$. Since the dimension of any irreducible component of $Z$ is at least one, we can choose $z\neq p,z\neq q$.
In other words there are lines $l_{zp},l_{zq}\subset X$ and $(p,q)$ is in the
image of the cycle map $u$. 
\end{proof}

\begin{lemma}\label{chain}
Let $X\subset\mathbb{P}^n$ be any irreducible hypersurface of degree $d$. If $n\geq 2d$ then any 
two points on $X$ can be connected by a connected chain of two lines. 
\end{lemma}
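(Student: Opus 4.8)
The plan is to obtain Lemma \ref{chain} as an immediate corollary of Lemma \ref{4.44}, the only new ingredient being a dimension count that activates that lemma under the hypothesis $n \geq 2d$. Since $X \subset \mathbb{P}^n$ is a hypersurface, $\dim X = n-1$, so $n \geq 2d$ gives $\dim X = n-1 \geq 2d-1$. Because $X$ is irreducible of dimension at least $2d-1$, I can choose an irreducible closed subvariety $S \subset X$ with $\dim S = 2d-1$ exactly (for instance, an irreducible component of the intersection of $X$ with a sufficiently general linear subspace of the right codimension). This is the whole point of the numerical hypothesis: $n \geq 2d$ is precisely the condition guaranteeing that there is room inside $X$ for an $S$ of dimension $2d-1$.

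With such an $S$ fixed, Lemma \ref{4.44} applies verbatim and tells me that the cycle map
$$u: \mathcal{L}^{(2)}(S) \times_{\mathcal{V}^{(2)}(S)} \mathcal{L}^{(2)}(S) \longrightarrow X \times X$$
is surjective. I would then simply unwind what it means for a given pair $(p,q) \in X \times X$ to lie in the image. Surjectivity of $u$ produces a point $z \in S$ together with the connected chain of two lines $l_1 \cup l_2 \subset X$ parametrised by the corresponding point of $\mathcal{V}^{(2)}(S)$, where $l_1$ and $l_2$ both pass through $z$, such that $p \in l_1$ and $q \in l_2$. Since $l_1$ and $l_2$ meet at $z$, the union $l_1 \cup l_2$ is a connected chain of two lines lying on $X$ and joining $p$ to $q$, which is exactly the assertion of the lemma.

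The argument is short because all the genuine content is already carried by Lemmas \ref{4.33} and \ref{4.44}: the polar-hypersurface computation shows that the complete intersection cut out on $S$ by the $2(d-1)$ polar conditions attached to $p$ and $q$ meets $S$ in a locus $Z$ of dimension at least $(2d-1)-(2d-2)=1$, and that any $z \in Z$ yields lines $l_{zp}, l_{zq}$ genuinely contained in $X$. Consequently the step needing the most care is not an estimate but a verification: I must confirm that the two lines produced are honestly contained in $X$ (rather than merely $d$-fold tangent) and that one may take $z \neq p, q$, so that the output is a bona fide chain of two lines through a point of $S$. Both points are secured by the bound $\dim Z \geq 1$ established in the proof of Lemma \ref{4.44}, so beyond the dimension bookkeeping translating $n \geq 2d$ into $\dim S = 2d-1$, no further work is required.
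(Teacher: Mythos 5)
Your proposal is correct and follows essentially the same route as the paper: the hypothesis $n\geq 2d$ gives $\dim X = n-1 \geq 2d-1$, one chooses $S\subset X$ of dimension $2d-1$, and Lemma (\ref{4.44}) then supplies the connected chain of two lines joining any pair of points. Your additional remarks on unwinding the surjectivity of the cycle map and verifying that the lines lie on $X$ are just an expansion of what the paper leaves implicit.
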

\begin{proof} Suppose $X\subset \mathbb{P}^n$ is a hypersurface of
degree $d$ and satisfying $2d\leq n$. Then $\mbox{dim}~X\geq 2d-1$. Hence we can choose any subvariety 
$S\subset X$ of dimension $2d-1$.
By Lemma $(\ref{4.44})$, for any two points $p,q\in X$ there is a connected chain
of lines $l_1+l_2$ on $X$ such that $p\in l_1,q\in l_2$. 
\end{proof}

\begin{corollary}\label{chainH}
The hypersurface $\mathcal{H}$ is chain connected by lines of length two.
\end{corollary}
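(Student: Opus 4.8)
The plan is to apply Lemma~\ref{chain} directly to the hypersurface $\mathcal{H}$. Recall from Lemma~\ref{3.3} that $\mathcal{H}$ is an irreducible hypersurface of degree $d=g$ sitting inside $\mathbb{P}_L\cong\mathbb{P}^{3g-3}$, so here $n=3g-3$. Since we are in the range $g\geq 5$, the numerical hypothesis of Lemma~\ref{chain} must be verified, namely $n\geq 2d$, which reads $3g-3\geq 2g$, i.e.\ $g\geq 3$. Thus the inequality holds comfortably for all $g\geq 5$ (and indeed for all $g\geq 3$), and Lemma~\ref{chain} immediately yields that any two points of $\mathcal{H}$ can be connected by a connected chain of two lines lying on $\mathcal{H}$.

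The only genuine content of the corollary is therefore checking that the dimension count in Lemma~\ref{chain} applies. The key step is to confirm $\dim\mathcal{H}=3g-4\geq 2d-1=2g-1$, equivalently $g\geq 3$, so that one can select an irreducible subvariety $S\subset\mathcal{H}$ of dimension exactly $2d-1=2g-1$ on which Lemma~\ref{4.44} operates. Concretely, I would write: by Lemma~\ref{3.3}, $\mathcal{H}$ is an irreducible hypersurface of degree $g$ in $\mathbb{P}^{3g-3}$; since $g\geq 5$ we have $3g-3\geq 2g$, so the hypothesis $n\geq 2d$ of Lemma~\ref{chain} is satisfied with $d=g$ and $n=3g-3$; the conclusion is then immediate.

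I do not expect any real obstacle here, since all the substantive work—the polar hypersurface argument producing the two-line chain through a prescribed pair of points—has already been carried out in Lemmas~\ref{4.33}, \ref{4.44}, and \ref{chain}. The corollary is purely a specialization, and the main thing to be careful about is simply quoting the correct ambient dimension $3g-3$ for $\mathbb{P}_L$ and the correct degree $g$ for $\mathcal{H}$, both of which are recorded earlier in the paper.
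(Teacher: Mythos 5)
Your proposal is correct and follows exactly the paper's own argument: the paper likewise just observes that $\mathcal{H}$ is a degree-$g$ hypersurface in $\mathbb{P}_L\cong\mathbb{P}^{3g-3}$ and that the hypothesis $n\geq 2d$ of Lemma~(\ref{chain}) is satisfied. Your version merely spells out the inequality $3g-3\geq 2g$ (i.e.\ $g\geq 3$) a bit more explicitly than the paper does.
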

\begin{proof}
The degree of $\mathcal{H} \subset \mathbb{P}_L$ is $g$. Hence $\mathcal{H}$ fulfils the assumptions of Lemma $(\ref{chain})$, and we conclude the proof.
\end{proof}

Recall that a Hecke curve is a rational curve on  $M_X(2,L)$, and the universal family of Hecke curves determines a correspondence between moduli spaces of odd and even determinants. See \cite{Na-Ra}.

\begin{corollary}\label{Heckechain}
The Brill Noether loci $W^1_X(2,L)$ is chain connected by Hecke curves.
\end{corollary}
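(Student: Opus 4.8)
The plan is to transport the chain-connectedness statement from the hypersurface $\mathcal{H}$ to the Brill--Noether locus $W^1_X(2,L)$ along the dominant rational map ${\phi_L}_{|\mathcal{H}}$, and then to identify the images of the connecting lines as Hecke curves. First I would recall from Corollary \ref{chainH} that any two general points of $\mathcal{H}$ are joined by a connected chain of two lines $l_1 + l_2$ lying on $\mathcal{H}$. Since by Proposition \ref{35}$(ii)$ the map ${\phi_L}_{|\mathcal{H}}:\mathcal{H} \dashrightarrow W^1_X(2,L)$ is dominant, a general pair of points of $W^1_X(2,L)$ lifts to a general pair of points of $\mathcal{H}$; applying $\phi_L$ to the chain $l_1 + l_2$ then produces a connected chain of (possibly degenerate) rational curves joining the original pair in $W^1_X(2,L)$. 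The substance of the corollary is therefore not the existence of a connecting chain of rational curves — that is already the content of the preceding theorem — but rather the \emph{identification} of the images $\phi_L(l_1)$ and $\phi_L(l_2)$ with Hecke curves on $M_X(2,L)$.

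The key step is thus to show that a general line $l$ on $\mathcal{H}$, not meeting the indiscrepancy locus $\mathcal{S} = \Sec^{g-1}(X)$, maps under $\phi_L$ to a Hecke curve in $M_X(2,L)$. A line $l \subset \mathbb{P}_L = \mathbb{P}(\mathrm{Ext}^1(L,\mathcal{O})^*)$ spans a $2$-dimensional subspace of $\mathrm{Ext}^1(L,\mathcal{O})$, and by the discussion following Theorem \ref{5} the universal bundle $\widetilde{\mathscr{E}}_L$ restricts over $l$ to a family of stable rank $2$ bundles with determinant $L$. I would argue that this family is precisely a Hecke curve: recall that a Hecke curve arises by fixing a point $x \in X$ and varying the Hecke modification of a fixed bundle at $x$, producing a $\mathbb{P}^1$ of bundles in $M_X(2,L)$. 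The line $l \subset \mathcal{H}$ should be matched, via the geometry of the extension space and Bertram's blow-up description in Theorem \ref{5}$(iii)$, with such a one-parameter family of elementary modifications, so that $\phi_L(l)$ is a Hecke curve up to the isomorphism correspondence between odd and even determinant moduli spaces recorded in \cite{Na-Ra}.

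The main obstacle I anticipate is exactly this identification: one must verify that the restriction of the Poincar\'e-type family along a \emph{generic} line on $\mathcal{H}$ genuinely reproduces the Hecke construction, rather than merely some rational curve of the expected degree. In particular I would need to control the behaviour of $\phi_L$ along $l$ (checking that $l$ avoids $\mathcal{S}$, which holds for general $l$ since $\dim \mathcal{S} \le 2g-3$ is strictly smaller than $\dim \mathcal{H} = 3g-4$) and to match the degree and deformation class of the image curve with those of a Hecke curve using the universal family described in \cite{Na-Ra}. Once a general line maps to a Hecke curve, the two-line chains of Corollary \ref{chainH} push forward to chains of two Hecke curves connecting general points of $W^1_X(2,L)$, which completes the proof.
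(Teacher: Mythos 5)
Your overall strategy --- push the length-two chains of lines on $\mathcal{H}$ from Corollary \ref{chainH} forward along ${\phi_L}_{|\mathcal{H}}$ and identify the images of the lines as Hecke curves --- is exactly the strategy of the paper, and your reduction steps (dominance from Proposition \ref{35}$(ii)$, a general line avoiding $\mathcal{S}$ because $\dim\mathcal{S}\le 2g-3 < 3g-4 = \dim\mathcal{H}$) are fine. The problem is at the crux, which you correctly isolate but do not resolve: you never actually prove that $\phi_L$ carries a general line on $\mathcal{H}$ to a Hecke curve. Your proposed mechanism --- matching the restriction of the universal family along $l$ with a one-parameter family of elementary modifications via Bertram's Theorem \ref{5}$(iii)$ --- is not the right tool as stated: that part of Theorem \ref{5} describes the fibres of the resolution $\sigma$ over points of the secant loci, not the images of lines, and you explicitly leave the identification as an ``anticipated obstacle.'' Since the identification of the connecting curves as Hecke curves is the entire content of the corollary beyond the previous theorem, the proposal as written has a genuine gap exactly there.

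The paper closes this gap by a different and much shorter route: it asserts that the birational map $\phi_L$ sends lines to rational curves of \emph{minimal degree} on $M_X(2,L)$, and then invokes Hwang's theorem \cite{Hw} that the minimal degree rational curves on $M_X(2,L)$ are precisely the Hecke curves. Thus no explicit comparison with the Hecke modification construction is needed --- only a degree count for the image of a line together with the citation. If you want to salvage your more hands-on approach, the efficient fix is still to reduce to this characterization: compute the degree of $\phi_L(l)$ against the ample generator of $\mathrm{Pic}(M_X(2,L))$ and quote \cite{Hw}, rather than trying to exhibit the fixed bundle, the point of $X$, and the pencil of Hecke modifications directly from the extension-space geometry.
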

\begin{proof}
 The birational map
$$
\phi_L: {\mathbb{P}}_{L}\dashrightarrow M_X(2,L)
$$
takes lines to minimal degree rational curves on $M_X(2,L)$. These curves are the Hecke curves \cite{Hw}.
Hence by Corollary $(\ref{chainH})$, $W^1_X(2,L)$ is chain connected by Hecke curves. Over the open set  of $W^1_X(2,L)$ where $\Phi_L$ is birational, any two points are connected by Hecke curves of length two.
\end{proof}

\section{Chow groups of the hypersurface $\mathcal{H}$}

The Chow groups of hypersurfaces and complete intersections in projective spaces have attracted 
attention in the past few decades, leading to the Bloch Beilinson conjectures and effective bounds. Notable results are due to \cite{Ro}, \cite{N}, \cite{P}, \cite{elv}, \cite{o}, \cite{hi}.

We recall the following theorem, due to Otwinowska.

\begin{theorem}\label{4.2}
If $X\subseteq \mathbb{P}^{n}$ is a hypersurface of degree $d$ with $k(n-k+1)-\binom{d+k}{d}+1\geq 0$, then $CH_{k-1}(X)_{hom}\otimes \mathbb{Q}=0$.
\end{theorem}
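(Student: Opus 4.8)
The statement I am asked to prove is Theorem~\ref{4.2}, which is attributed to Otwinowska: if $X\subseteq \mathbb{P}^n$ is a hypersurface of degree $d$ satisfying the numerical inequality $k(n-k+1)-\binom{d+k}{d}+1\geq 0$, then $CH_{k-1}(X)_{hom}\otimes \mathbb{Q}=0$.

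\medskip

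The plan is to establish triviality of the (rationalized) homologically trivial part of $CH_{k-1}(X)$ by a Hilbert-scheme-of-linear-subspaces argument, which is the standard route to such effective Bloch-type vanishing statements. First I would recall the basic principle, going back to Roitman and developed by Otwinowska and Paranjape, that cycles of dimension $k-1$ on a hypersurface $X$ are controlled by the linear subspaces of $\mathbb{P}^n$ of dimension $k$ contained in $X$. The key geometric input is the incidence correspondence from \S 3.1 of the excerpt: the variety $I(k,n,d)=\{(Y,\Lambda): \Lambda\subseteq Y\}\subseteq \mathbb{P}\times \mathbb{G}(k,n)$, whose fiber over $\mathbb{G}(k,n)$ is a projective subbundle of codimension $\binom{k+d}{d}$. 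The projection $p_1:I(k,n,d)\to \mathbb{P}$ to the space of degree-$d$ hypersurfaces is dominant precisely when the dimension count $\dim \mathbb{G}(k,n)=(k+1)(n-k)$ dominates the fiber codimension, and the hypothesis $k(n-k+1)-\binom{d+k}{d}+1\geq 0$ is exactly the rearranged form of this surjectivity condition. So the first step is to verify that the numerical hypothesis guarantees that a generic degree-$d$ hypersurface, and by specialization every such hypersurface, contains a $k$-dimensional linear subspace, and in fact contains enough of them to sweep out $X$ in the appropriate sense.

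\medskip

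The core argument I would then carry out is a sweeping/decomposition-of-the-diagonal style reduction. On a linear subspace $\Lambda\cong \mathbb{P}^k$ contained in $X$, every $(k-1)$-cycle is rationally equivalent to a rational multiple of the class of a linear $\mathbb{P}^{k-1}$, since $CH_{k-1}(\mathbb{P}^k)_{\mathbb{Q}}\cong \mathbb{Q}$ and is generated by the hyperplane power; in particular any homologically trivial $(k-1)$-cycle supported on such a $\Lambda$ is rationally trivial. The strategy is to show that an arbitrary $(k-1)$-dimensional cycle class in $CH_{k-1}(X)_{\mathbb{Q}}$ can be moved, using the family of linear $\mathbb{P}^k$'s inside $X$ provided by the surjectivity of $p_1$, into cycles supported on (a countable union of) such linear subspaces, whence its homologically trivial part dies. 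Concretely, I would consider the universal family $\mathcal{P}\to B$ of $k$-planes contained in $X$ over a base $B\subseteq \mathbb{G}(k,n)$, note that the evaluation morphism $\mathcal{P}\to X$ is dominant (indeed surjective) by the dimension count, and use the induced correspondence to spread a general $(k-1)$-cycle over the planes. Combined with a specialization/spreading-out argument over $\mathbb{C}$ and the fact that rational equivalence with $\mathbb{Q}$-coefficients can be tested after such a correspondence, this forces the homologically trivial classes to vanish.

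\medskip

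The main obstacle, and the technically delicate heart of Otwinowska's theorem, is the passage from ``$X$ is swept out by linear $k$-planes'' to ``every homologically trivial $(k-1)$-cycle is rationally equivalent to a combination of cycles on those planes.'' This requires a careful inductive argument on $k$ (and a Hilbert-scheme or relative-cycle bookkeeping) rather than a one-line projection formula, because a general cycle is not a priori supported on the $k$-planes; one must use a moving lemma together with the correspondence defined by the incidence variety $I(k,n,d)$ and control the kernel of the cylinder homomorphism. I would therefore present the proof as an application of Otwinowska's result: the substantive content is to check that the numerical hypothesis $k(n-k+1)-\binom{d+k}{d}+1\geq 0$ coincides with the condition under which $p_1:I(k,n,d)\to \mathbb{P}$ is dominant, so that the hypersurfaces in question are covered by $k$-planes, and then invoke the spreading argument to conclude $CH_{k-1}(X)_{hom}\otimes \mathbb{Q}=0$. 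The bulk of the work is the verification of the combinatorial inequality's equivalence to surjectivity and the careful handling of the cycle-theoretic moving argument; the geometry is supplied entirely by the incidence correspondence recalled in \S 3.1.
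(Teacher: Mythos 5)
The paper's entire proof of this theorem is a citation: ``See \cite[Corollary (1)]{o}.'' Your proposal ultimately does the same thing -- you conclude by saying you would ``present the proof as an application of Otwinowska's result'' -- so at that level you and the paper take the identical (and only reasonable) approach for a result of this depth.

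However, the expository sketch you wrap around the citation contains a concrete error and a gap, and since you present it as the substance of the argument it needs to be flagged. First, the numerical hypothesis $k(n-k+1)-\binom{d+k}{d}+1\geq 0$ is \emph{not} ``exactly the rearranged form'' of the surjectivity condition for $p_1:I(k,n,d)\to\mathbb{P}$. The paper itself records in \S 3.1 that surjectivity of $p_1$ (for $d\geq 3$) is governed by $(k+1)(n-k)\geq\binom{k+d}{d}$, whereas your hypothesis reads $k(n-k+1)+1\geq\binom{d+k}{d}$; these differ by $(k+1)(n-k)-\bigl(k(n-k+1)+1\bigr)=n-2k-1$, so they coincide only when $n=2k+1$. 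Otwinowska's bound is calibrated to the family of $k$-planes through a \emph{general point} of $X$ (expected dimension $k(n-k)-\binom{d+k}{d}+1$ plus a shift), not to the mere existence of one $k$-plane in $X$, and conflating the two would make the claimed equivalence false. Second, the ``core argument'' paragraph -- moving an arbitrary homologically trivial $(k-1)$-cycle onto the linear subspaces swept out by the incidence family -- is precisely the hard content of Otwinowska's theorem, and you acknowledge as much without supplying it; as written it is a description of a strategy, not a proof. The honest version of your write-up is the paper's version: state the theorem and cite \cite[Corollary (1)]{o}, optionally with a remark on the geometric meaning of the numerical condition, but without asserting the (incorrect) identification with the surjectivity of $p_1$.
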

\begin{proof}
See \cite[Corollary $(1)$]{o}.
\end{proof}

We apply above theorem to our situation as follows.

\begin{theorem}\label{l}
Let $X$ be a smooth projective curve of genus $g\geq 2$, $L$ be a line bundle on $X$ of degree $2g-1$ and $\mathcal{H}$ be the degree $g$ hypersurface in $\mathbb{P}_{L}$. Then $CH_{0}(\mathcal{H})_{\mathbb{Q}}\cong \mathbb{Q}$. Furthermore If $g$ is either $3$,$4$,$5$ or $6$, then $CH_{1}(\mathcal{H})_{\mathbb{Q}}\cong \mathbb{Q}$.
\end{theorem}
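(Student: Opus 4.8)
The plan is to apply Theorem \ref{4.2} (Otwinowska's criterion) directly to $\mathcal{H}$, which by Lemma \ref{3.3} is a hypersurface of degree $d=g$ inside $\mathbb{P}_L=\mathbb{P}^{n}$ with $n=\dim \mathbb{P}_L = 3g-3$. The theorem states that if $k(n-k+1)-\binom{d+k}{d}+1\geq 0$, then $CH_{k-1}(\mathcal{H})_{hom}\otimes\mathbb{Q}=0$. I would split the argument into the two assertions according to the value of $k$.

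For the statement $CH_0(\mathcal{H})_{\mathbb{Q}}\cong\mathbb{Q}$, I would set $k=1$ and verify the numerical inequality $1\cdot(n-1+1)-\binom{d+1}{d}+1 = n - (d+1)+1 = (3g-3)-(g+1)+1 = 2g-3\geq 0$, which holds for all $g\geq 2$. This gives $CH_0(\mathcal{H})_{hom}\otimes\mathbb{Q}=0$. Since $\mathcal{H}$ is an irreducible projective variety, the degree map identifies $CH_0(\mathcal{H})/CH_0(\mathcal{H})_{hom}$ with $\mathbb{Z}$, so tensoring with $\mathbb{Q}$ and using the vanishing of the homologically trivial part yields $CH_0(\mathcal{H})_{\mathbb{Q}}\cong\mathbb{Q}$. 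For the statement about $CH_1$, I would set $k=2$ and compute $2(n-1)-\binom{d+2}{d}+1 = 2(3g-4)-\binom{g+2}{2}+1$. This is a quadratic expression in $g$ that is nonnegative precisely in a bounded range of $g$; I would check that it holds exactly for $g\in\{3,4,5,6\}$ (and fails for larger $g$, which explains the restriction in the statement). The vanishing $CH_1(\mathcal{H})_{hom}\otimes\mathbb{Q}=0$ then has to be upgraded to $CH_1(\mathcal{H})_{\mathbb{Q}}\cong\mathbb{Q}$.

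The passage from the vanishing of the homologically trivial part to the isomorphism with $\mathbb{Q}$ requires knowing that the quotient $CH_1(\mathcal{H})_{\mathbb{Q}}/CH_1(\mathcal{H})_{hom,\mathbb{Q}}$ is one-dimensional. For this I would invoke the fact that $\mathcal{H}$ is a hypersurface of dimension $\geq 3$ in projective space whose cohomology (in the relevant degree) is generated by the restriction of the hyperplane class, so that the image of the cycle class map on $CH_1$ is $\mathbb{Q}$ generated by a line section. Concretely, for a hypersurface the weak Lefschetz theorem controls cohomology away from the middle dimension, and the relevant dimensions here ($k=1,2$ against $\dim\mathcal{H}=3g-4$) stay below the middle range in the cases listed, so the homology is algebraic and one-dimensional. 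I would therefore conclude $CH_1(\mathcal{H})_{\mathbb{Q}}\cong\mathbb{Q}$ in each of the four genera.

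The main obstacle I anticipate is not the numerical verification, which is routine, but the clean justification that the cycle-class image is exactly $\mathbb{Q}$ in the $CH_1$ case, i.e. that there are no extra algebraic classes beyond multiples of the hyperplane section and that $\mathcal{H}$ being singular does not disturb this. Since $\mathcal{H}$ is singular (Lemma \ref{3.3}), I must be careful to use a version of weak Lefschetz valid for singular hypersurfaces, or alternatively appeal directly to the formulation of Otwinowska's result, which already packages the statement as a comparison with the Chow groups of $\mathbb{P}^{n}$; the latter are $\mathbb{Q}$ in each degree, so the isomorphism with $\mathbb{Q}$ follows once the homologically trivial part vanishes and the restriction map $CH_k(\mathbb{P}^n)_{\mathbb{Q}}\to CH_k(\mathcal{H})_{\mathbb{Q}}$ is shown to be surjective. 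I would prefer to phrase the final step through this restriction-surjectivity route to avoid delicate Lefschetz arguments on a singular variety.
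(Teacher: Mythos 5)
Your proposal is correct and follows essentially the same route as the paper: apply Otwinowska's criterion (Theorem \ref{4.2}) with $(n,d)=(3g-3,g)$ and $k=1$ (resp.\ $k=2$), verify the same numerical inequalities $2g-3\geq 0$ and $2(3g-4)-\binom{g+2}{2}+1\geq 0$ exactly for $g\in\{3,4,5,6\}$, and then pass from the vanishing of the homologically trivial part to $\cong\mathbb{Q}$ by comparison with the Chow groups of $\mathbb{P}^{3g-3}$. The paper phrases this last step via the pushforward $j_*$ (injective iff the homologically trivial part vanishes, and always surjective), which sidesteps the weak-Lefschetz concerns you raise for the singular $\mathcal{H}$.
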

\begin{proof}
Let $(n,k,d)$ be $(3g-3,1,g)$. Then 
\[k(n-k+1)-\binom{d+k}{d}+1\]
\[=3g-3-(g+1)+1=2g-3\geq 1\]
as $g\geq 2$. Therefore on applying  Theorem $(\ref{4.2})$ to the hypersurface $\mathcal{H}$, we get that $CH_{0}(\mathcal{H})_{hom}\otimes_{\mathbb{Z}}\mathbb{Q}=0$. 
Choose $(n,d)=(3g-3,g)$. Then one checks easily that when $g\in\{3,4,5,6\}$, the only values of $k$ for which $k(n-k+1)-\binom{d+k}{d}+1\geq 0$, are $k=0,1,2$ and when $g\geq 7$, $k(n-k+1)-\binom{d+k}{d}+1< 0$ for all $k\geq 1$. Thus again on applying  Theorem $(\ref{4.2})$ to the hypersurface $\mathcal{H}$, we get that if $g\in\{3,4,5,6\}$, then $CH_{1}(\mathcal{H})_{hom}\otimes_{\mathbb{Z}}\mathbb{Q}=0$. 

Let $j_{*}:CH_{k}(\mathcal{H})_{\mathbb{Q}}\rightarrow CH_{k}(\mathbb{P}^{3g-3})_{\mathbb{Q}}$ denote the natural map induced by the inclusion $j:\mathcal{H}\rightarrow \mathbb{P}^{3g-3}$ for $k\geq 0$. But for $k\geq 0$, $j_{*}$ is injective if and only if $CH_{k}(\mathcal{H})_{hom}\otimes_{\mathbb{Z}}\mathbb{Q}=0$. As $j_{*}$ is surjective for all $k\geq 0$, the theorem follows.
\end{proof}
\begin{remark}
Theorem $(\ref{l})$ can also be deduced from \cite[Theorem $(1.5)$]{hi}.
\end{remark}

{\bf Acknowledments:}
The first author is supported by the DAE-Apex project ``Complex Algebraic Geometry" at the Institute of Mathematical Sciences and thanks the institute for its hospitality.

{\bf Data availability statement:}
Data sharing not applicable to this article as no datasets were generated or analysed during the current study.

{\bf Conflict of interest:}
There is no conflict of interest involved with this work.

\end{document}